\documentclass[final,5p,times,twocolumn]{elsarticle}

\usepackage{amssymb}
\usepackage{amsmath}
\usepackage{amsthm}

\usepackage{algorithm}
\usepackage{algorithmicx}
\usepackage{xcolor}
\usepackage[noend]{algpseudocode}
\algrenewcommand\algorithmicrequire{\textbf{Input:}}
\algrenewcommand\algorithmicensure{\textbf{Output:}}

\journal{}

\newtheorem{theorem}{Theorem}
\numberwithin{theorem}{section}
\newtheorem{lemma}[theorem]{Lemma}
\newtheorem{proposition}[theorem]{Proposition}
\newtheorem{remark}[theorem]{Remark}

\newtheorem{assumption}[theorem]{Assumption}

\newtheorem{problem}[theorem]{Problem}

\newcommand{\rv}[1]{\boldsymbol{#1}}                      
\newcommand{\drm}{\mathrm{d}}                             
\newcommand{\sgnMeasure}[1]{{\mathcal{M}(#1)}}            
\newcommand{\pdMeasure}[1]{{\mathcal{M}_+(#1)}}           
\newcommand{\boundedfunctional}[1]{{\mathcal{F}(#1)}}     
\newcommand{\contfnc}[1]{{\mathcal{C}(#1)}}               
\newcommand{\mR}{\mathbb{R}}                              
\newcommand{\scpair}{\eta}                               
\newcommand{\scspace}{{X\times A}}                        
\newcommand{\trsknl}{Q}                                   
\newcommand{\optm}[1]{#1^\star}                           
\newcommand{\adj}[1]{{#1^*}}                              
\newcommand{\expectation}[1]{{\mathbb{E}\left[{#1}\right]}}
\newcommand{\proj}{\mathcal{P}}                           
\newcommand{\initdistr}{\bar{\mu}_{\mathrm{init}}}        

\DeclareMathOperator*{\plim}{plim}
\DeclareMathOperator*{\plimsup}{plimsup}
\DeclareMathOperator*{\partialorder}{\prec_{grlex}}
\DeclareMathOperator*{\argmin}{argmin}
\DeclareMathOperator*{\cov}{cov}
\DeclareMathOperator{\mE}{\mathbb{E}}

\begin{document}

\begin{frontmatter}


\title{Consistent inverse optimal control for infinite time-horizon  discounted nonlinear systems under noisy observations\tnoteref{label1}}
\tnotetext[label1]{This work was supported by National Natural Science Foundation (NNSF) of China under Grant 62573286, by Natural Science Foundation of Shanghai under Grant 25ZR1401208, by the Wallenberg AI, Autonomous Systems and Software Program (WASP) funded by the Knut and Alice Wallenberg Foundation, and the Swedish Research Council (VR) under grant 2024-05776.}


\author[SJTU,IMR]{Ziliang Wang} 
\author[Chalmers]{Axel Ringh}
\author[SJTU,IMR]{Han Zhang\corref{cor1}}
\ead{zhanghan_tc@sjtu.edu.cn}
\cortext[cor1]{Corresponding author}

\affiliation[SJTU]{organization={School of Automation and Intelligent Sensing, Shanghai Jiao Tong University},
            city={Shanghai},
            country={China}}

\affiliation[IMR]{organization={Insitute of Medical Robotics, Shanghai Jiao Tong University},
            city={Shanghai},
            country={China}}

\affiliation[Chalmers]{organization={Department of Mathematical Sciences, Chalmers University of Technology and University of Gothenburg},
            city={Gothenburg},
            country={Sweden}}

\begin{abstract}
Inverse optimal control (IOC) aims to estimate the underlying cost that governs the observed behavior of an expert system. 
However, in practical scenarios, the collected data is often corrupted by noise, 
which poses significant challenges for accurate cost function recovery. 
In this work, we propose an IOC framework that effectively addresses the presence of observation noise. 
In particular, compared to our previous work \cite{wang2025consistent},
 we consider the case of discrete-time, infinite-horizon, discounted MDPs whose transition kernel is only weak Feller.
By leveraging the occupation measure framework, 
we first establish the necessary and sufficient optimality conditions for the expert policy 
and then construct an infinite dimensional optimization problem based on these conditions. 
This problem is then approximated by polynomials to get a finite-dimensional numerically solvable one, 
which relies on the moments of the state-action trajectory's occupation measure.
More specifically, the moments are robustly estimated from the noisy observations 
by a combined misspecified Generalized Method of Moments (GMM) estimator derived from observation model and system dynamics.
Consequently, the entire algorithm is based on convex optimization which alleviates the issues that arise from local minima
and is asymptotically and statistically consistent.
Finally, the performance of the proposed method is illustrated through numerical examples.
\end{abstract}



\begin{keyword}
Inverse optimal control\sep weak Feller stochastic system, noisy observation, system identification.



\end{keyword}

\end{frontmatter}



\section{Introduction}
The control behaviour of optimal control and Reinforcement Learning (RL) is determined by the cost (reward) function.
And one of the fundamental challenges in optimal control and RL is to design appropriate cost (reward) functions so that it can guide the autonomous agents to accomplish desired tasks or achieve a good control performance.
Inverse optimal control (IOC) seeks to infer the underlying cost function that explains the observed behavior of an expert system. When the expert's decision mechanism is modeled as solving an optimal control problem, IOC provides a principled mechanism for learning the underlying objectives from demonstrations and has found broad applications in robotics \citep{mombaur2010human,likmeta2021dealing}. In particular, after the underlying cost function that best suits the contextual environment is recovered, one can then deploy the learned cost function to synthesize policies for autonomous agents to perform similar tasks and achieves a similar performance to the expert.

Despite the conceptual appeal, accurate cost recovery remains challenging.
This is because in practical scenarios, demonstration data are rarely noise-free. Observation noise that arises from sensing limitations, state estimation, or data acquisition pipelines would inevitably contaminate the observed demonstration trajectories. Such noise fundamentally alters the statistical properties of the state-action pairs' distributions in the observed demonstration trajectory.
As shown by \citep{aswani2018inverse}, directly applying the method of Karush-Kuhn-Tucker (KKT) condition violation minimization to noisy data would lead to inconsistent cost estimates.
Consequently, in the IOC algorithm design, one needs to be careful when handling observation noise.
On the other hand, for general nonlinear stochastic optimal control systems, its value function that characterizes the expert's optimality is usually implicit, i.e., a closed-form expressions is not available. Hence the corresponding IOC problem does not have an explicit characterization of the optimality structure of the expert's behavior, which make the IOC algorithm design even more challenging.
Moreover, for IOC algorithms, it is desirable that they are free from local minimum issues. This is because it would not only lead to suboptimal cost estimates, but also make the consistency property of the designed IOC algorithm hard to guarantee in practice since the analysis is usually with repect to the global minimum.

\subsection{Related works}
As a special case, the IOC problem for linear quadratic (LQ) systems has been extensively studied in the literature.
In particular, \citep{li2018convex,Menner2018Convex} have been focused on the noiseless observation case, while \citep{guo2023imitation,yu2021system,zhang2019inverse,zhang2019inverseCDC,zhang2021inverse,zhang2022statistically,zhang2024statistically,zhang2024inverse,li2026inverse} consider the noisy observation case. Nevertheless, all these works are limited to LQ systems, which restricts their applicability to more general nonlinear systems.
For general nonlinear systems, \citep{rickenbach2024inverse} construct the IOC algorithm based Pontryagin's Minimum Principle (PMP) and see the problem as a error-in-variables problem. Nevertheless, the methods in \citep{rickenbach2024inverse} are only applicable for finite-time horizon deterministic systems, and they also rely on local optimization techniques, which are prone to local minimum issues and lack consistency theoratical guarantees.
Similar methods \citep{keshavarz2011imputing,molloy2018finite,molloy2020online,jin2019inverse} also minimize the PMP condition violation, and they also suffer from the same issues.
Furthermore, bilinear systems that results from Koopman lifting \citep{fernandez2025estimating}, control-affine systems \citep{lian2022inverse,Rodrigues2022Inverse} have also been considered, yet they all require a quadratic cost structure and do not consider noisy observations.
On the other hand, \citep{pauwels2016linear, rouot2017inverse} consider the IOC problem for general continuous-time finite horizon nonlinear systems based on the occupation measure \citep{hernandez2012discrete}, which leads to a polynomial optimization problem. However, these works do not consider noisy observations either. The asymptotic consistency regarding the polynomial approximation is also not established therein.
Another staightforward way to estimate the cost function is to minimize the differences between the observed policy and the optimal policy induced by the estimated cost function \citep{abbeel2004apprenticeship,ho2016generative}. Such methods usually require solving the forward optimal control problem repeatedly, which is computationally expensive. 
In addition, maximum entropy based IOC methods \citep{ziebart2008maximum,zhou2018Infinite,Mehr2023Maximum,garrabe2025convex} have been extensively studied in the literature, and they recover the cost function by maximumizing the likelihood of Boltzmann distribution that has the maximum entropy. Yet such strong assumption on the behaviour distribution is not always valid in practice, and it also requires solving the forward optimal control problem repeatedly.
\subsection{The contributions}
To address the above challenges, we propose an IOC framework that effectively handles noisy observations for discrete-time, infinite-horizon, discounted Markov Decision Processes (MDPs) whose transition kernel is only weak Feller.
In particular, the contribution is three-fold:
\begin{itemize}
  \item Compared to our prior work \citep{wang2025consistent}, which assumes the transition kernel is strongly Feller, this work relaxes such assumption. This means that the proposed IOC framework can now be applied to a broader class of systems, including those with deterministic dynamics.
  \item To deal with the observation noise, we propose to robustly estimate the moments of the state-action trajectory's occupation measure from noisy observations by a combined misspecified Generalized Method of Moments (GMM) estimator derived from observation model and system dynamics. This allows us to effectively mitigate the impact of observation noise on cost recovery.
  \item The entire algorithm is based on convex optimization, which spares from local minimum issues. Moreover, we rigorously prove that the proposed IOC algorithm is asymptotically and statistically consistent.
\end{itemize}

\textit{Notations:} For a set $G$, we denote $\boundedfunctional{G}$ as the set of bounded measurable functions, 
we also denote $\mathcal{B}(G)$ as the Borel-$\sigma$ algebra on the set $G$, and $\mathcal{M}_+(G)$, $\mathcal{M}(G)$ denotes the set of all finite non-negative, finite signed Borel measures on $G$, respectively.
Moreover, $\|v\|_W^2$ denote $v^\top W v$ for any vector $v$ and positive definite matrix $W$.
In addition, we use zero-based indexing and adopt the slicing notation $(\cdot)_{[i:j]}$ 
to denote sub-vectors and sub-matrices (e.g., $v_{[i:j]}$ extracts elements from index $i$ to $j-1$ of vector $v$), 
following the convention in \cite{harris2020array}.
For a matrix $A$, we denote $\sigma_{\min}(A)$ and $\sigma_{\max}(A)$ as the smallest and largest singular values of $A$, respectively.
We use \textbf{\textit{italic bold font}} to denote stochastic variables.
Furthermore, denote $\plim_{k\rightarrow\infty} \rv{s}_k = s$ as the limit of $\rv{s}_k$ and $\plimsup_{k\rightarrow\infty} \rv{s}_k$ as the limit superior of $\rv{s}_k$ as $k\rightarrow\infty$.
\section{Problem formulation}

Consider the Markov Decision Process (MDP) $\left(X,A, \{A(x)\mid x\in X\}, \trsknl, \alpha, \bar{\ell}\right)$, 
where $\bar{\ell}:\scspace\rightarrow\mR$ is bounded and $\alpha\in(0,1)$.
The expert takes an optimal policy that minimizes the expectation of 
the infinite-time horizen discounted cumulative cost

\begin{subequations}
\begin{align}
  \optm{(\pi_t)}\in \argmin_{(\pi_t)\in \Pi} & \expectation
                  {\sum_{t=0}^\infty\alpha^t\bar{\ell}(\rv{x}_t, \rv{a}_t)},\\
  \text{s.t. }  &\rv{x}_{0}\sim \initdistr,\\
                &\rv{x}_{t+1}\sim \trsknl(\cdot\mid \rv{x}_t, \rv{a}_t),\\
                &\rv{a}_{t} \sim \pi_t(\cdot\mid \rv{x}_{t}),
\end{align}\label{eq: original OC}
\end{subequations}
where $\Pi:=\left\{ (\pi_t)_{t=0}^\infty \mid 
\pi_t:X\rightarrow \pdMeasure{A},\, \pi_t(A(x)\mid x)= 1, \right.$ $\left.\forall x\in X\right\}$
\footnote{
  Since there always exists an optimal Markov policy \citep[Prop.~9.1]{bertsekas1996stochastic}, 
  we only consider Markov policies here.
}
is the set of feasible policies.
Moreover, we assume $A(x)=A$ for all $x\in X$, which means the action set is independent of the state.
Furthermore, we impose the following mild assumptions on the system:
\begin{assumption}[Transition Kernel]\label{Asmp: Transform kernel cont.}
  The transition kernel $\trsknl$ is weak Feller \citep[App.~C, Def.~C.3]{hernandez2012discrete}. 
  More precisely, viewing $\trsknl$ as a push-forward mapping 
    $\trsknl:\sgnMeasure{\scspace}\rightarrow \sgnMeasure{X}$, 
    this is equivalent to saying that $\adj{\trsknl}(\contfnc{X})\subset \contfnc{\scspace},$
    where $\adj{\trsknl}:\boundedfunctional{X}\rightarrow\boundedfunctional{\scspace}$ is its adjoint operator.
\end{assumption}

\begin{assumption}[State-action set]\label{Asmp: Compact space}
  The state space $X$ is compact and Archimedean \citep[Def.~3.18]{laurent2008sums}.
  Moreover, the feasible action set $A(x) = A\subset \mR ^{n_a}$ is 
    non-empty and independent of the state, 
    and $A$ is also a compact Archimedean set.
\end{assumption}

\begin{assumption}[Cost function]\label{Asmp: Cont. cost features}
  The cost function has a given structure: $\ell=\theta_{\ell}^{\top}\varphi$, 
  where $\varphi:\scspace\rightarrow \mR ^{n_\ell}$ is continuous and known a priori. 
  Consequently, the ``true" cost of the expert $\bar{\ell} = \theta_\ell^\top\varphi$ 
  is bounded with respect to supremum norm.
\end{assumption}

Notably, \eqref{eq: original OC} is nonlinear and such nonlinearity would cause issues such as local minimal if applying standard IOC techniques \citep{rickenbach2024inverse,keshavarz2011imputing,molloy2018finite,molloy2020online,jin2019inverse}.
Hence we are motivated to acquire a linear reformulation of the problem \eqref{eq: original OC}, we adopt the occupation measure framework \citep{hernandez2012discrete}.
In particular, the occupation measure $\mu_t\in\pdMeasure{\scspace}$ is defined as
\begin{align*}
  & \mu_t(\Gamma_x\times \Gamma_a):= \expectation{\mathbb{I}_{\Gamma_x\times \Gamma_a}(\rv{x}_t, \rv{a}_t)},
    &\forall \Gamma_x\times\Gamma_a\in \mathcal{B}(\scspace).
\end{align*}
Consequently, the forward problem \eqref{eq: original OC} can be equivalently expressed as
\begin{subequations}
\begin{align}
  \optm{V}(\initdistr)&:=\min_{ (\mu_t)_{t=0}^\infty } \sum_{t=0}^{\infty} 
                      \langle \bar{\ell}, \alpha\mu_t \rangle,\\
  \text{s.t. }  & \proj_x\mu_0 = \initdistr,\\
                & \proj_x\mu_{t+1} = \trsknl\mu_t,\label{eq: occupation measure evolution}\\
                & \mu_t \in \pdMeasure{\scspace},
\end{align}\label{eq: OC in Deterministic Model}
\end{subequations}
where $\proj_x:\sgnMeasure{\scspace}\rightarrow\sgnMeasure{X}$ maps $\mu_t$ to its marginal distribution on $X$, namely,
\begin{align*}
  (\proj_x\mu_t)(\Gamma_x) := \mu_t(\Gamma_x, A),\quad\forall \Gamma_x\in \mathcal{B}(X).
\end{align*}
Since we only consider Markov policies $(\pi_t)_{t=0}^\infty\in\Pi$, 
by \citep[Prop.~9.2]{bertsekas1996stochastic}, 
for any feasible policy $(\pi_t)_{t=0}^\infty$ that is feasible to \eqref{eq: original OC}, 
there exists a corresponding occupation measure 
$(\mu_t)_{t=0}^\infty$ that is feasible to \eqref{eq: OC in Deterministic Model},
and the converse also holds.
Moreover, by \citep[Cor.~9.5.1]{bertsekas1996stochastic}, 
\eqref{eq: OC in Deterministic Model} and \eqref{eq: original OC} 
have the same optimal value and for corresponding policies 
$(\pi_t)_{t=0}^\infty$ and $(\mu_t)_{t=0}^\infty$, 
$(\mu_t)_{t=0}^\infty$ is optimal to \eqref{eq: OC in Deterministic Model} 
if and only if $(\pi_t)$ is optimal to \eqref{eq: original OC}.
This means that it is equivalent for us to consider 
the infinite-dimensional linear formulation \eqref{eq: OC in Deterministic Model} 
of the original nonlinear one \eqref{eq: original OC}.

Next, consider the value iteration operator 
    $T:\boundedfunctional{X}\rightarrow\boundedfunctional{X}$ defined by
\begin{align*}
  T(V)(x) := \inf_{a\in A} (\bar\ell + \alpha \adj{\trsknl}V)(x,a).
\end{align*}
By Assumptions~\ref{Asmp: Transform kernel cont.}, \ref{Asmp: Compact space}, and \ref{Asmp: Cont. cost features}, 
$ (\bar\ell + \alpha\adj\trsknl V)\in \contfnc{\scspace} $ is uniformly continuous if $V\in \contfnc{X}$. 
Thus, we have $ T(\contfnc{X})\subset\contfnc{X}$. 
Now let $ V_0=0$, it follows that $V_{t+1}:=T(V_t)\in\contfnc{X}$ for all $t\ge 0$.
Since $T$ is a contraction mapping on $\boundedfunctional{X}$ 
    with respect to the supremum norm \citep[p.~52]{hernandez2012discrete},
  the limit $\bar{V}:=\lim_{t\rightarrow\infty}V_t \in \contfnc{X}$ exists and is the unique fixed point.
By \citep[Prop.~9.17]{bertsekas1996stochastic}, 
  there exists an optimal deterministic stationary Markov policy for \eqref{eq: original OC}, 
  and
\begin{align}
  \int_X \bar{V}(x)\initdistr(\drm x)=\optm{V}(\initdistr).\label{eq: Vstar-with-Vbar}
\end{align}
Thus, without loss of generality, we introduce the following assumption.
\begin{assumption}[Expert policy]\label{Asmp: D.S.M. policy} 
  The expert adopts a stationary Markov policy, i.e.,
  $ \pi_t(\rv{a}\mid \rv{x}_{0:t}, \rv{a}_{0:t-1})=\bar{\pi}(\rv{a}\mid\rv{x}_t)$.
\end{assumption}
In addition, we assume there exist state-action pairs for \eqref{eq: OC in Deterministic Model} 
that are not optimal, and that these pairs have a non-zero (Lebesgue) measure in $X\times A$. In particular, we make the following assumption.

\begin{assumption}[Regularity condition]\label{Asmp: non-trivial cost}
  There exists $(x,a)\in\scspace$ such that $ (\bar{\ell} + \alpha\adj\trsknl \bar{V})(x,a) > \bar{V}(x)$.
  Equivalently, since $ (\bar{\ell} + \alpha\adj\trsknl \bar{V} - \bar{V})\in \contfnc{\scspace}$, we have
  \begin{align*}
    \int_{\scspace} (\bar{\ell} + \alpha\adj\trsknl \bar{V} - \bar{V}) \drm x \drm a > 0.
  \end{align*}
\end{assumption}
The above assumption serves as a regularity condition for the IOC problem to come.
Such assumption avoid trivial and uninformative situations like ``when $\theta_\ell=0$, 
any policy and trajectory are optimal''.

Further, let $\mu_t^{\bar{\pi}}$ be the occupation measure induced by the ``true" optimal policy
 $\bar{\pi}$ of \eqref{eq: original OC}, and consider the following problem
\begin{subequations}
\begin{align}
  \min_{\mu_\Sigma\in \sgnMeasure{\scspace}} &\langle \bar{\ell},\mu_\Sigma \rangle,\\
  \text{s.t. } & (\proj_x-\alpha Q)\mu_\Sigma = \initdistr,\\
               & \mu_\Sigma \in \pdMeasure{\scspace}.
\end{align}\label{eq: linear form OC primal}
\end{subequations}
Note that $\mu_\Sigma^{\bar{\pi}}:=\sum_{t=0}^\infty \alpha^t \mu_t^{\bar{\pi}}$ 
is still feasible for such an infinite-dimensional linear program.
This implies the fact that $(\mu_t^{\bar\pi})_{t=0}^\infty$ 
is optimal to \eqref{eq: OC in Deterministic Model} if and only if 
$\mu_\Sigma^{\bar\pi}$ is optimal to \eqref{eq: linear form OC primal}.
And the dual of \eqref{eq: linear form OC primal} takes the form
\begin{subequations}
\begin{align}
  \max_{V \in \boundedfunctional{X}} &\langle V,\initdistr \rangle,\\
  \text{s.t. } & (\bar{\ell} - \adj{(\proj_x-\alpha Q)}V)(x,a) \ge 0, 
                                \forall (x,a)\in\scspace.
\end{align}\label{eq: linear form OC dual}
\end{subequations}
Since it holds that $\bar{\ell} - \adj{(\proj_x-\alpha Q)}\bar{V} \ge T(\bar{V})-\bar{V}=0$, 
$\bar{V}$ is also feasible for \eqref{eq: linear form OC dual}, and we have
\begin{align*}
  \langle \bar{V},\initdistr \rangle=\int_X \bar{V}(x)\initdistr(\drm x)
      \stackrel{\text{\eqref{eq: Vstar-with-Vbar}}}{=} \optm{V}(\initdistr) = \langle \bar{\ell},\mu_\Sigma^{\bar{\pi}} \rangle.
\end{align*}
Thus, strong duality holds between \eqref{eq: linear form OC primal} and \eqref{eq: linear form OC dual}.

Since \eqref{eq: linear form OC primal} is convex, 
this further implies that its Karush-Kuhn-Tucker (KKT) conditions are both sufficient and necessary.
In view of the fact that \eqref{eq: original OC} and \eqref{eq: OC in Deterministic Model} 
are equivalent, given the ``true'' optimal distribution $(\mu_t^{\bar{\pi}})$ 
of the state-action pairs, we can consider the structure of KKT conditions 
of \eqref{eq: linear form OC primal} instead of the nonlinear ones of \eqref{eq: original OC}.

Next, we formulate the problem of IOC.
Notably, we only have access to a finite time-length dataset of expert demonstrations in practice. 
To proceed, we need to make an assumption on the ``information'' contained in the finite time-length dataset.

\begin{assumption}[Persistent excitation]\label{Asmp: enough state observed}
  The observed trajectory data length $N$ is a positive integer such that 
  $\mu_\Sigma^{\bar\pi,x}$ is absolutely continuous with respect to $\mu^{N,\bar{\pi},x}$, 
  where $\mu_\Sigma^{\bar\pi,x}:=\mathcal{P}_x\mu_\Sigma^{\bar\pi}$, $\mu^{N,\bar\pi,x}:=\mathcal{P}_x\mu^{N,\bar\pi}$, 
  $\mu^{N,\bar\pi}:=\gamma\sum_{t=0}^{N-1}\alpha^t\mu_t^{\bar\pi}$ and $\gamma := \frac{1-\alpha}{1-\alpha^N}$ is 
  a normalization constant that ensures $\mu^{N,\bar\pi}$ is a probability measure.
\end{assumption}
Intuitively speaking, the above assumption guarantees that the observed trajectory data of length 
$N$ does not miss any state that has non-negligible probability 
over the infinite discounted horizon under the expert policy $\bar{\pi}$.

Moreover, in practice, the observations are usually corrupted by measurement noise that arises 
from sensing errors or imperfect data logging.
Thus, we model each state-action observation to be a consequence of the contamination 
of additive noise, which is independent of the state-action pairs 
and it is identically distributed over the time instants according to a known distribution 
(which can be obtained by sensors calibration).
\begin{assumption}[Observation noise]\label{Asmp: i.i.d. obsv}
The observation of a finite-time trajectory $(\rv{y}_t)_{t=0:N}$ takes the form of
\begin{align*}
    \rv{y}_{t} := \begin{bmatrix}
      \rv{y}_{t,x}\in\mR^{n_x}\\ \rv{y}_{t,a}\in\mR^{n_a}
    \end{bmatrix} = \begin{bmatrix} \rv{x}_{t} \\ \rv{a}_{t} \end{bmatrix} + \underbrace{\begin{bmatrix}
      \rv{v}_{t,x}\\ \rv{v}_{t,a}
    \end{bmatrix}}_{\rv{v}_{t}},
\end{align*}
where $(\rv{x}_{t},\rv{a}_{t})$ denotes the ``true" state-action pair generated by the expert policy; 
$\rv{v}_{t} \sim \nu$ is observation noise and is independent of $(\rv{x}_t, \rv{a}_t)$.
Moreover, $\nu$ is a known non-degenerate probability distribution characterized by the intrinsics of the measurement device, 
with a strictly positive definite covariance $\Sigma_v$.
The dataset comprises $M$ i.i.d. trajectories of length $N+1$, i.e., $\left\{(y_{t,i})_{t=0}^{N}\right\}_{i=1}^{M}$, 
where $y_{t,i}$ is the realization of the i.i.d. random variable $\rv{y}_{t,i}$ for $i=1,\ldots,M$, and for any given $t$, $\rv{y}_{t,i}$ has the same distribution as $\rv{y}_t$ for all $i$.
\end{assumption}

Equipped with the above set-ups, we are ready to present the IOC problem formulation.
\begin{problem}\label{prb: main problem}
  Let a discrete-time MDP \eqref{eq: original OC} be governed by an expert policy $\bar\pi$ that 
  satisfies Assumption~\ref{Asmp: Compact space}- \ref{Asmp: D.S.M. policy} and \ref{Asmp: non-trivial cost}. 
  And let the collected observation data set $\left\{(y_{t,i})_{t=0}^{N}\right\}_{i=1}^{M}$ 
  satisfies Assumption~\ref{Asmp: enough state observed} and \ref{Asmp: i.i.d. obsv}.
  Recover the underlying parameters $\theta_\ell$ of the cost function, 
  to which the expert policy $\bar{\pi}$ is optimal.
\end{problem}

\section{The algorithm construction}
In this section, we propose the algorithm to 
recovery the underlying cost function by the given noisy observation. 

\subsection{The noiseless case}
Our prior work \citep{wang2025consistent} has established the IOC formulation for noise-free observations; 
and here we briefly summarize the key elements for completeness.
As mentioned earlier, since strong duality holds between \eqref{eq: linear form OC primal} and \eqref{eq: linear form OC dual},
given the policy $\bar{\pi}$, it is globally optimal if and only if 
the corresponding $\mu^{\bar{\pi}}_{\Sigma}:=\sum_{t=0}^\infty\mu_t^{\bar\pi}$ satisfies the following Karush-Kuhn-Tucker condition.
\begin{subequations}
  \begin{equation}\label{eq: measure_forward_transition_constraints}
      (\proj_x-\alpha Q)\mu^{\bar{\pi}}_\Sigma = \initdistr,\mu^{\bar{\pi}}_\Sigma \in \pdMeasure{\scspace},
  \end{equation}
  \begin{equation}\label{eq: dual constraints}
    (\bar{\ell} - \adj{(\proj_x-\alpha Q)}V)(x,a) \ge 0, 
                                \forall (x,a)\in\scspace,    
  \end{equation}
  \begin{equation}\label{eq: complementary relaxation}
    \langle\bar{\ell} - \adj{(\proj_x-\alpha Q)}V, \mu^{\bar{\pi}}_\Sigma\rangle = 0.
  \end{equation}
\end{subequations}
Notably, given the expert's noise-free occupation measure $\mu_{\Sigma}^{\bar{\pi}}$, 
\eqref{eq: measure_forward_transition_constraints} is always satisfied.
Then, \eqref{eq: dual constraints} and \eqref{eq: complementary relaxation} are satisfied with $\ell=\bar{\ell}$. 
Conversely, if a $\hat\ell$ satisfies \eqref{eq: dual constraints} and \eqref{eq: complementary relaxation},
then $\bar\pi$ is also its optimal policy. 
Hence, the IOC problem reduces to finding $\ell$ that satisfies both \eqref{eq: complementary relaxation} and \eqref{eq: dual constraints}.
To ease the notation, we henceforth omit the superscript $\bar\pi$ and write $\mu_{\Sigma}$ as the expert's occupation measure.
And similarly, we write $\mu^{N}$ instead of $\mu^{N,\bar\pi}$.

In particular, under Assumption~\ref{Asmp: enough state observed}, the complementary slackness \eqref{eq: complementary relaxation} 
admits an equivalent formulation using the finite-horizon measure 

$\mu^N$, namely,
\begin{align}\label{eq: finite time complementary relaxation}
  \langle \ell - \adj{(\proj_x-\alpha Q)}V, \mu^N \rangle = 0.
\end{align}

We formally state this in the following proposition.

\begin{proposition}\label{prop: finite time equivalence}
  Recall the notation $\mu_\Sigma^x := \proj_x \mu_\Sigma $ and 
$\mu^{N,x} := \proj_x \mu^N$ and suppose Assumption~\ref{Asmp: enough state observed} holds.
Then for any feasible pair $(\ell, V)$ that satisfies \eqref{eq: dual constraints}, 
\eqref{eq: complementary relaxation} is equivalent to \eqref{eq: finite time complementary relaxation}.
\end{proposition}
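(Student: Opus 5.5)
The plan is to reduce both complementary-slackness conditions to statements about one nonnegative function integrated against the two state marginals. Then use the domination $\mu^N \le \gamma\,\mu_\Sigma$ for one direction and Assumption~\ref{Asmp: enough state observed} for the other. Fix a pair $(\ell,V)$ satisfying \eqref{eq: dual constraints} and set $g := \ell - \adj{(\proj_x-\alpha Q)}V$. Then $g\ge 0$ on $\scspace$. Moreover $g$ is bounded and measurable, since $\ell$ is continuous on the compact set $\scspace$, $V\in\boundedfunctional{X}$, and $\adj{Q}$ maps bounded measurable functions to bounded measurable functions. Hence $\langle g,\mu_\Sigma\rangle$ and $\langle g,\mu^N\rangle$ are well-defined, finite and nonnegative.

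\textbf{Step 1: disintegrate using the stationary expert policy.} By Assumption~\ref{Asmp: D.S.M. policy}, every $\mu_t$ factorizes as $\mu_t(\drm x,\drm a)=\mu_t^x(\drm x)\,\bar\pi(\drm a\mid x)$, where $\mu_t^x:=\proj_x\mu_t$. Summing with the weights $\alpha^t$ (the sums converge in total variation because $\alpha<1$ and each $\mu_t$ is a probability measure) gives
\begin{align*}
\mu_\Sigma(\drm x,\drm a)=\mu_\Sigma^x(\drm x)\,\bar\pi(\drm a\mid x),\qquad \mu^N(\drm x,\drm a)=\mu^{N,x}(\drm x)\,\bar\pi(\drm a\mid x).
\end{align*}
Define $h(x):=\int_A g(x,a)\,\bar\pi(\drm a\mid x)$. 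This is nonnegative, bounded and measurable. By Fubini/Tonelli,
\begin{align*}
\langle g,\mu_\Sigma\rangle=\int_X h\,\drm\mu_\Sigma^x,\qquad \langle g,\mu^N\rangle=\int_X h\,\drm\mu^{N,x}.
\end{align*}

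\textbf{Step 2: (\ref{eq: complementary relaxation}) $\Rightarrow$ (\ref{eq: finite time complementary relaxation}).} Setwise we have $\mu^N=\gamma\sum_{t=0}^{N-1}\alpha^t\mu_t\le\gamma\,\mu_\Sigma$, since the omitted terms $t\ge N$ are nonnegative measures. Because $g\ge0$, this yields $0\le\langle g,\mu^N\rangle\le\gamma\langle g,\mu_\Sigma\rangle=0$. This direction needs no persistence-of-excitation assumption.

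\textbf{Step 3: (\ref{eq: finite time complementary relaxation}) $\Rightarrow$ (\ref{eq: complementary relaxation}).} From $\int h\,\drm\mu^{N,x}=0$ and $h\ge0$ we get $h=0$ $\mu^{N,x}$-a.e. That is, the set $\{h>0\}$ is $\mu^{N,x}$-null. By Assumption~\ref{Asmp: enough state observed} it is then $\mu_\Sigma^x$-null, so $\int h\,\drm\mu_\Sigma^x=0$, which is \eqref{eq: complementary relaxation} by Step~1.

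The main obstacle is Step~1. Assumption~\ref{Asmp: enough state observed} only gives absolute continuity of the state marginals, not of the joint state-action measures. So it is essential to pass through the common conditional $\bar\pi(\cdot\mid x)$, which is where the stationarity of the expert policy is used. I would state the disintegration explicitly, including measurability of $x\mapsto\int g(x,a)\,\bar\pi(\drm a\mid x)$ via the stochastic-kernel property of $\bar\pi$, and check that it passes to the countable discounted sum by monotone convergence.
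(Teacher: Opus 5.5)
Your proof is correct and is essentially the paper's: both disintegrate $\mu_\Sigma$ and $\mu^N$ through the common stationary kernel $\bar\pi(\drm a\mid x)$, then transfer null sets between the state marginals using Assumption~\ref{Asmp: enough state observed}. The only difference is packaging: the paper uses the Radon--Nikodym density $\rho=\drm\mu_\Sigma^x/\drm\mu^{N,x}$ with $0<\rho<\infty$ to get both directions at once, whereas you get one direction from the domination $\mu^N\le\gamma\,\mu_\Sigma$ and the other from a direct null-set argument, which is slightly more elementary and shows that persistent excitation and the stationarity of $\bar\pi$ are needed only for the second direction.
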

\begin{proof}
  Since $\mu^{N,x}$ is a finite sum of the same marginals $\proj_x\mu_t$ 
  that constitute $\mu_\Sigma^x$, each with positive weight,
  it follows that $\mu^{N,x}\ll \mu_\Sigma^x$.
  By Assumption \ref{Asmp: enough state observed}, this further implies that $\mu^{N,x}$ and $\mu_\Sigma^x$ are mutually absolutely continuous. 
  In addition, let $\rho := \frac{\drm \mu_\Sigma^x}{\drm \mu^{N,x}}$ denote the Radon-Nikodym derivative, 
  and it holds that $0 < \rho(x) < \infty$ for $\mu^{N,x}$-almost every $x$.
  Because both occupation measures arise from the same (possibly stochastic) Markov policy, 
  their disintegrations share the same conditional kernel $\pi(\drm a \mid x)$:
  \[
  \mu_\Sigma(\drm x, \drm a) = \mu_\Sigma^x(\drm x)\,\pi(\drm a\mid x),\quad
  \mu^{N}(\drm x, \drm a) = \mu^{N,x}(\drm x)\,\pi(\drm a\mid x).
  \]
  For any feasible pair $(\ell, V)$ that satisfies \eqref{eq: dual constraints}, let
  $h(x,a):=(\ell - \adj{(\proj_x-\alpha Q)}V)(x,a) \ge 0$.
  Then it holds that
  \begin{align*}
    \langle h, \mu_\Sigma \rangle 
      &= \int_X \int_A h(x,a)\,\pi(\drm a\mid x)\,\mu_\Sigma^x(\drm x) \\
      &= \int_X \rho(x) \Big[\int_A h(x,a)\,\pi(\drm a\mid x)\Big] \mu^{N,x}(\drm x) = \langle h\cdot\rho,\; \mu^{N} \rangle,
  \end{align*}
  and hence it also holds that $\langle h\cdot \frac{1}{\rho},\mu_\Sigma\rangle = \langle h,\mu^N\rangle$.
  Since it also holds that $0<\rho<\infty$ $\mu^{N}$-a.s., it follows that $\langle h, \mu_\Sigma \rangle = 0$ 
  if and only if $\langle h, \mu^{N} \rangle = 0$. Therefore,
  \begin{align}\label{eq: support argument}
    \langle \ell \!-\! \adj{(\proj_x\!-\!\alpha Q)}V, \mu_\Sigma \rangle = 0
    \!\iff\! \langle \ell \!-\! \adj{(\proj_x\!-\!\alpha Q)}V, \mu^N \rangle = 0.
  \end{align}
\end{proof}
\begin{remark}[Extension of our prior work]
  Compared to \citep[Prop.~3.3]{wang2025consistent}, which requires a deterministic expert policy, 
  Proposition~\ref{prop: finite time equivalence} only relies on the existence of Radon-Nikodym derivative, and it
  applies to any Markov policy that satisfies Assumption~\ref{Asmp: enough state observed}.
\end{remark}
Proposition~\ref{prop: finite time equivalence} guarantees that we can replace $\mu_\Sigma$ with $\mu^N$ in \eqref{eq: complementary relaxation}.
Such gurantee is useful to constructing the IOC algorithm since its goal is to recover the underlying parameter $\theta_\ell$ 
in the cost function $\ell$ from the observed state-action trajectories $\{(y_{t,i})_{t=0}^N\}_{i=1}^M$ with finite length.
To this end, we propose to solve the following optimization problem.
\begin{subequations}\label{eq: ioc infinite dim}
  \begin{align}
      \min_{\psi, \theta_\ell\in \mathbb{R}^{n_\ell}, V} &\;\;\langle \psi, \mu^N\rangle,\\
      &\;\; \psi = \theta_\ell^\top \varphi + \alpha \adj{\trsknl} V - V,\label{eq: psi-linear-combi}\\
      &\;\; \int_{\scspace} \psi(x,a)\drm x\drm a \ge 1,\label{eq: psi-non-trivial}\\
      &\;\; \psi\in\contfnc{\scspace},\psi(x,a)\ge 0, \forall (x,a)\in\scspace,\label{eq: psi-in-C+}\\
      &\;\; V\in \contfnc{X} \label{eq: V-in-C},\\
      &\;\; \|V\|_\infty \le \beta_V, \|\theta_\ell\|_1\le \beta_\ell. \label{eq: norm-finite-constr}
  \end{align}
\end{subequations}
In particular, to avoid the trivial and uninformative solution mentioned in Assumption \ref{Asmp: non-trivial cost}, 
we add the regularity condition in Assumption \ref{Asmp: non-trivial cost} as the constraint \eqref{eq: psi-non-trivial}. 
And without losing generality, we set $\int_{\scspace} \psi(x,a)\drm x\drm a$ to be greater than 1 
since both $\theta_\ell$ and $V$ can anyway be scaled by a positive coefficient and the expert policy $\bar\pi$ still remains optimal.
By a similar argument as in \citep[Prop.~3.2, Prop.~3.3]{wang2025consistent}, the following property holds.
\begin{proposition}\label{prop: optimal value zero}
  Under Assumptions~\ref{Asmp: Cont. cost features}--\ref{Asmp: i.i.d. obsv}, 
  \eqref{eq: ioc infinite dim} attains its minimum with optimal value zero.
  Moreover, a cost $\ell=\theta_\ell^\top \varphi$ makes $\bar\pi$ optimal 
  if and only if there exists $V$ and $\psi$ such that 
  $(\psi, \theta_\ell, V)$ solves \eqref{eq: ioc infinite dim}.
\end{proposition}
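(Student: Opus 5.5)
The proof has three parts: the optimal value is bounded below by zero, a feasible point built from the true cost attains zero, and both directions of the characterization follow from Proposition~\ref{prop: finite time equivalence} together with the KKT conditions.

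\emph{Lower bound.} Constraint \eqref{eq: psi-in-C+} gives $\psi\ge 0$ pointwise on $\scspace$. Since $\mu^N\in\pdMeasure{\scspace}$, every feasible point has $\langle\psi,\mu^N\rangle\ge 0$. So the optimal value is at least zero, and it suffices to exhibit one feasible point with objective value zero.

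\emph{A feasible point with value zero.} Take the true parameter $\theta_\ell$ and the fixed point $\bar V\in\contfnc{X}$ of $T$. Set
\[
\bar\psi:=\bar\ell+\alpha\adj\trsknl\bar V-\bar V .
\]
\begin{itemize}
\item By Assumption~\ref{Asmp: Transform kernel cont.}, $\adj\trsknl\bar V\in\contfnc{\scspace}$, so $\bar\psi\in\contfnc{\scspace}$.
\item Since $T(\bar V)=\bar V$, we have $\bar\psi\ge T(\bar V)-\bar V=0$, which gives \eqref{eq: psi-in-C+}. This is exactly the dual feasibility \eqref{eq: dual constraints} of $\bar V$.
\item Strong duality, established above, gives $\langle\bar\ell,\mu_\Sigma\rangle=\langle\bar V,\initdistr\rangle$. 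Combined with the primal constraint $(\proj_x-\alpha\trsknl)\mu_\Sigma=\initdistr$, this yields $\langle\bar\psi,\mu_\Sigma\rangle=0$.
\item Proposition~\ref{prop: finite time equivalence} then transfers this to $\langle\bar\psi,\mu^N\rangle=0$.
\item Assumption~\ref{Asmp: non-trivial cost} gives $c:=\int_{\scspace}\bar\psi\,\drm x\,\drm a>0$.
\end{itemize}
The triple $(\bar\psi/c,\ \theta_\ell/c,\ \bar V/c)$ is feasible for \eqref{eq: psi-linear-combi}--\eqref{eq: V-in-C}, because every constraint there is linear homogeneous except \eqref{eq: psi-non-trivial}, which now holds with equality. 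For \eqref{eq: norm-finite-constr}, I will read $\beta_V,\beta_\ell$ as chosen large enough that $\|\bar V\|_\infty/c\le\beta_V$ and $\|\theta_\ell\|_1/c\le\beta_\ell$. This is implicit in the formulation and is stated in the cited prior work \citep{wang2025consistent}. Hence the minimum exists and equals zero.

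\emph{Characterization, "if" direction.} Let $(\psi,\theta_\ell,V)$ solve \eqref{eq: ioc infinite dim}. Then $\psi=\ell+\alpha\adj\trsknl V-V=\ell-\adj{(\proj_x-\alpha\trsknl)}V\ge 0$, so $(\ell,V)$ satisfies \eqref{eq: dual constraints}. The optimal value is zero, so $\langle\psi,\mu^N\rangle=0$, and Proposition~\ref{prop: finite time equivalence} gives $\langle\psi,\mu_\Sigma\rangle=0$, i.e.\ \eqref{eq: complementary relaxation}. Since $\mu_\Sigma$ satisfies \eqref{eq: measure_forward_transition_constraints}, weak duality for \eqref{eq: linear form OC primal}--\eqref{eq: linear form OC dual} with cost $\ell$ becomes tight. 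Explicitly, for any feasible $\mu'$,
\[
\langle\ell,\mu'\rangle\ge\langle V,\initdistr\rangle=\langle\ell,\mu_\Sigma\rangle .
\]
So $\mu_\Sigma$ is optimal, and by the equivalence of \eqref{eq: original OC}, \eqref{eq: OC in Deterministic Model} and \eqref{eq: linear form OC primal}, $\bar\pi$ is optimal for $\ell$.

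\emph{Characterization, "only if" direction.} If $\bar\pi$ is optimal for $\ell=\theta_\ell^\top\varphi$, repeat the construction of the second step with $\ell$ in place of $\bar\ell$. Use the fixed point $V_\ell$ of the value iteration operator for $\ell$; it is continuous by the same argument as for $\bar V$. Optimality of $\bar\pi$ gives $\langle\ell,\mu_\Sigma\rangle=\langle V_\ell,\initdistr\rangle$, hence complementary slackness.

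\emph{Main obstacle.} The nontriviality constraint needs $\int\psi_\ell>0$. If $\psi_\ell\equiv0$, then $\ell$ is the trivial cost that makes every policy optimal, which Assumption~\ref{Asmp: non-trivial cost} is meant to exclude. So I will read the claim for costs that satisfy the regularity condition, and rescale by a positive constant as in the second step; this rescaling leaves the set of optimal policies unchanged. The norm bounds $\beta_V,\beta_\ell$ need the same caveat.
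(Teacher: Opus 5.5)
Your proof is correct and is essentially the argument the paper intends: the paper defers to the prior work's KKT/strong-duality characterization, transfers complementary slackness between $\mu_\Sigma$ and $\mu^N$ via Proposition~\ref{prop: finite time equivalence}, and normalizes using the regularity condition, exactly as you do. The caveats you flag are genuine: the bounds $\beta_V,\beta_\ell$ must be large enough, and the ``only if'' direction holds only up to positive rescaling of $(\theta_\ell,V)$ for non-trivial costs. They match the paper's own remarks that the norm bounds can be chosen arbitrarily large and that $\theta_\ell$ and $V$ may be rescaled without loss of generality.
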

This proposition shows that solving Problem \ref{prb: main problem} is equivalent to 
solving an infinite-dimensional linear program \eqref{eq: ioc infinite dim}.
Though being convex, \eqref{eq: ioc infinite dim} is still an infinite-dimensional optimization problem. 
To proceed, we need to approximate it with a finite dimensional optimization problem.
By following the approximation procedure in \citep[Sec.~3.2]{wang2025consistent}, 
we construct a finite-dimensional approximation. We briefly explain the procedure here for self-containment.

First, let $P_{d_V}(X)$ denote 
the space of polynomials on $X$ with total degree at most $d_V$, 
and let $r:X\rightarrow \mathbb{R}^{D_V}$ denote a basis of $P_{d_V}(X)$.
Similarly, let $\phi:\scspace\rightarrow \mathbb{R}^{D_\psi}$ denote
a basis of $P_{d_\psi}(\scspace)$ with $d_\psi \ge d_V$.
To turn the infinite dimensional optimization \eqref{eq: ioc infinite dim} into a finite dimensional one, 
we restrict the optimization variables in \eqref{eq: ioc infinite dim} 
to finite-degree polynomials, namely,
\begin{align*}
  V \approx \theta_V^\top r,\qquad \psi \approx \theta_\psi^\top\phi,
\end{align*}
where $\theta_V$ and $\theta_\psi$ are the coefficients of the polynomials.
Since $(\adj{\proj}r)(x,a):=r(x)$ are polynomials on $\scspace$, there exists $G_1\in\mR^{D_\psi\times D_V}$ such that 
\begin{align}\label{eq: proj r G1 phi}
  \adj{\proj} r = G_1^\top\phi.
\end{align}
We further approximate the continuous feature functions $\varphi$ in the cost function 
and the basis functions $r(x)$ forward evolution with the linear combinations of polynomial basis $\phi$. 
Namely,
\begin{align}\label{eq: matrix definitions}
  \varphi \approx H^\top\phi, \quad 
  \adj{\trsknl}r \approx G_2^\top\phi,
\end{align}
where $H\in\mathbb{R}^{D_\psi\times n_\ell}$, and $G_2\in\mathbb{R}^{D_\psi\times D_V}$.\footnote{
  When taking Lagrange interpolation basis with the interpolation nodes $\{\eta_{i}\}_{i=0}^{D_\psi-1}$ as $\phi$,
  matrices $G_1$, $G_2$ and $H$ are computed by evaluating the function values of $r, \adj{Q}r$ and $\varphi$ at the interpolation nodes.
  For example, the $(i,j)$-th entry is $(G_1)_{[i,j]} = r_i(\proj_x \eta_{j})$, 
  and $(G_2)_{[i,j]} = \int r_i(x')Q(\drm x'\mid\eta_{j})$.
}
Now that $\psi$, $\varphi$, $\adj{\trsknl}V$ and $V$ are approximated under the same basis, 
the constraints \eqref{eq: psi-linear-combi} and \eqref{eq: psi-non-trivial} can be approximated as
\begin{align*}
  \theta_\psi = H\theta_\ell + \alpha G_2 \theta_V - G_1 \theta_V,\quad
  \theta_\psi^\top \underbrace{\int_{\scspace} \phi(x,a) \drm x \drm a}_{d} \ge 1.
\end{align*}
Consequently, the infinite-dimensional optimization \eqref{eq: ioc infinite dim} is transformed to 
the following finite-dimensional approximation.
\begin{subequations}\label{eq: finite dim ioc in expectation}
\begin{align}
  \min_{\theta_\psi, \theta_\ell, \theta_V}&\; \bar{m}^\top\theta_\psi,\label{eq: ioc_algorithm_obj_fun}\\
  & \theta_\psi = \underbrace{\begin{bmatrix}
    H, \alpha G_2 - G_1
  \end{bmatrix}}_{\Xi_\psi \in \mR^{D_\psi\times (n_\ell + D_V)}}\begin{bmatrix}
    \theta_\ell\\\theta_V
  \end{bmatrix},\label{eq: finite dim ioc-constr-1}\\
  &d^\top \theta_\psi \ge 1,\\
  &\theta_\psi^\top\phi\ge 0,\label{eq: finite dim ioc-constr-3}\\
  &\|\theta_V\|_1\le \beta_V',\|\theta_\ell\|_1\le \beta_\ell,\label{eq: finite dim ioc-constr-4}
\end{align}
\end{subequations}
where 
\begin{align}\label{eq: moment vector definition}
  \bar{m} := \int_{\scspace} \phi(\eta) \, \mu^N(\drm\eta) = \mathbb{E}_{\rv{\eta} \sim \mu^N}[\phi(\rv{\eta})]
\end{align}
is the moment vector under occupation measure $\mu^N$. 
And since the norm bounds $\beta_V$ and $\beta_\ell$ in \eqref{eq: norm-finite-constr} can be chosen arbitrarily large 
in practice, we can assume that they are inactive for the optimal solution, and thus replacing the $\beta_V$ by a sufficient large $\beta_V'$ in \eqref{eq: finite dim ioc-constr-4}.
Though being finite-dimensional, the constraint \eqref{eq: finite dim ioc-constr-3} need extra care so that 
we can solve \eqref{eq: finite dim ioc in expectation} numerically. We elaborate on this in Sec.~\ref{sec: experiments}.
On the other hand, note that we also need the moment vector $\bar{m}$ to solve \eqref{eq: finite dim ioc in expectation}, 
which need to be estimated with the noisy observations.
This is addressed in the next subsection.

\subsection{Moment estimation from noisy observations}\label{sec: moment estimator}
To estimate the moment vector $\bar{m}$ from the corrupted trajectory observation $\{(y_{t,i})_{t=0}^N\}_{i=1}^M$, 
we develop a statistical procedure that leverages both the additive noise structure and the system dynamics.

In this section, for notational simplicity, 
the analysis is based on monomial bases. Namely,
both $\phi:\scspace\rightarrow \mathbb{R}^{D_\psi}$ and $r:X\rightarrow \mathbb{R}^{D_V}$ 
are assumed to be monomial bases whose degrees do not exceed $d_\psi$ and $d_V$, 
and they have dimensions $D_\psi$ and $D_V$ respectively.
This is not restrictive, since we can always obtain the moments in other bases (e.g. Lagrange interpolation basis) via a linear transformation of the moments in monimial basis.\footnote{
  The analysis in this section yields moments estimation in monomial basis.
  If the basis $\phi$ in \eqref{eq: moment vector definition} is not monomial, we can obtain its corresponding moments estimation via the following steps.
  Let $\phi_{\mathrm{mono}}$ denote the monomial basis 
  and $\bar{m}_{\mathrm{mono}} := \int \phi_{\mathrm{mono}}(\eta) \mu^N(\drm\eta)$ 
  be the corresponding moment vector.
  Since $\phi$ is a polynomial basis, 
  there exists an invertible change-of-basis matrix $B \in \mathbb{R}^{D_\psi \times D_\psi}$ 
  such that $\phi = B \phi_{\mathrm{mono}}$.
  The moment vector in \eqref{eq: moment vector definition} is then obtained 
  via $\bar{m} = B^\top\bar{m}_{\mathrm{mono}}$.
  Hence the subsequent GMM procedure estimates $\bar{m}_{\mathrm{mono}}$ from noisy data,
  and $\bar{m}$ is recovered by applying the known linear transformation $B$.
}
More precisely, for any $\eta = (x, a) \in \mR^{n_x} \times \mR^{n_a}$, let 
\begin{align*}
  \phi(\eta) = \left(\eta^{\mathbf{d}}\right)_{\mathbf{d} \in \mathcal{D}_\psi},
\end{align*}
where $\mathcal{D}_\psi := \{\mathbf{d} \in \mathbb{N}^{n_x+n_a} \mid \|\mathbf{d}\|_1 \leq d_\psi\}$ 
is the set of multi-indices with total degree at most $d_\psi$,
and $\eta^{\mathbf{d}} := \eta_1^{d_1} \eta_2^{d_2} \cdots \eta_{n_x+n_a}^{d_{n_x+n_a}}$ 
for $\mathbf{d} = [d_1, \ldots, d_{n_x+n_a}]^\top\in\mathcal{D}_\psi$.
Our goal is to estimate the moment vector $\bar{m}$
from the noisy observations $\{\{\rv{y}_{t,i}\}_{t=0}^N\}_{i=1}^M$.

\subsubsection{Moment conditions from additive noise}
For each multi-index $\mathbf{d} \in \mathcal{D}_\psi$, 
the raw moment of the observed state-action pair at time-step $t$ takes the form
\begin{align*}
  \expectation{\rv{y}_{t}^\mathbf{d}} 
  &= \expectation{(\rv{\eta}_{t}+\rv{v}_{t})^\mathbf{d}},
\end{align*}
where $\rv{\eta}_{t} = [\rv{x}_{t}^\top, \rv{a}_{t}^\top]^\top$ denotes the true (unobserved) state-action pair 
and $\rv{v}_{t}$ is the observation noise.
By the binomial theorem in multi-index notation, it holds that
\begin{align}\label{eq: binomial expansion}
  (\rv{\eta}_t+\rv{v}_t)^\mathbf{d} 
  = \sum_{\mathbf{d}'} C(\mathbf{d}, \mathbf{d}') \rv{\eta}_t^{\mathbf{d}'} \rv{v}_t^{\mathbf{d}-\mathbf{d}'},
\end{align}
and the multi-index binomial coefficient takes the form
\begin{align*}
  C(\mathbf{d}, \mathbf{d}') := \prod_{j=1}^{n_x+n_a} C(d_j, d_j'),
\end{align*}
where $C(n, k) = \binom{n}{k} = \frac{n!}{k!(n-k)!}$ denotes the standard binomial coefficient, i.e., ``$n$ choose $k$''.
Moreover, by Assumption~\ref{Asmp: i.i.d. obsv}, $\rv{v}_{t}$ is independent of $\rv{\eta}_{t}$.
Taking expectations on both sides of \eqref{eq: binomial expansion} yields
\begin{align}\label{eq: moment condition}
  \expectation{\rv{y}^\mathbf{d}_{t}} 
  = \sum_{\mathbf{d}'} C(\mathbf{d}, \mathbf{d}') 
    \expectation{\rv{v}^{\mathbf{d}-\mathbf{d}'}_{t}} \expectation{\rv{\eta}^{\mathbf{d}'}_{t}},
\end{align}
where the noise moments $\expectation{\rv{v}^{\mathbf{d}^{\prime\prime}}_{t}}$ are known a priori
  by Assumption~\ref{Asmp: i.i.d. obsv}.
Aggregating the powered observations with the discount structure, we define
\begin{align}\label{eq: sample moment}
  \rv{m}_{\mathbf{d}}^{\mathrm{obs}} := \gamma\sum_{t=0}^{N-1} \alpha^t \rv{y}_{t}^\mathbf{d}.
\end{align}
Taking the expectation on both hand sides of \eqref{eq: sample moment}, plugging \eqref{eq: moment condition} in and in view of the fact that $\rv{v}_t\sim\nu$ is stationary and independent of $\rv{\eta}_t$ from Assumption \ref{Asmp: i.i.d. obsv}, we have
\begin{align*}
  \expectation{\rv{m}_\mathbf{d}^{\mathrm{obs}}} &= \gamma\sum_{t=0}^{N-1}\alpha^t \sum_{\mathbf{d}'} C(\mathbf{d}, \mathbf{d}') 
    \expectation{\rv{v}^{\mathbf{d}-\mathbf{d}'}_{t}} \expectation{\rv{\eta}^{\mathbf{d}'}_{t}}\\
    &=\sum_{\mathbf{d}'}C(\mathbf{d},\mathbf{d}')\expectation{v^{\mathbf{d}-\mathbf{d}'}}\underbrace{\left(\gamma\sum_{t=0}^{N-1}\alpha^t\expectation{\rv{\eta}_t^{\mathbf{d}'}}\right)}_{\bar{m}_{\mathbf{d}'}},
\end{align*}
where $\rv{v}$ has the same distribution $\nu$ as $\rv{v}_t$ for all $t$.
The above equation relates the observed statistics to the component $\bar{m}_{\mathbf{d}'}$ in the unknown moment vector $\bar{m}$ defined in \eqref{eq: moment vector definition}.
Furthermore, stacking all multi-indices $\mathbf{d}_i \in \mathcal{D}_\psi$ in the ascending order defined by $\partialorder$, 
i.e., the graded lexicographical order.
And recall \eqref{eq: moment vector definition},
we can rewrite the above equation in compact form as
\begin{align}\label{eq: moment system}
  \mathbb{E}[\underbrace{\begin{bmatrix}\rv{m}^{\mathrm{obs}}_{\mathbf{d}_1}\\\vdots\\\rv{m}^{\mathrm{obs}}_{\mathbf{d}_{n_x+n_a}}\end{bmatrix}}_{\rv{m}^{\mathrm{obs}}}]
  = \Phi_\nu \underbrace{\begin{bmatrix}
  \gamma\sum_{t=0}^{N-1}\alpha^t \expectation{\rv{\eta}_t^{\mathbf{d}_1}}\\
  \vdots\\
  \gamma\sum_{t=0}^{N-1}\alpha^t \expectation{\rv{\eta}_t^{\mathbf{d}_{n_x+n_a}}}
  \end{bmatrix}}_{\bar{m}},
\end{align}
where the components of matrix $\Phi_\nu \in \mathbb{R}^{D_\psi \times D_\psi}$ takes the form
\begin{equation*}
  \Phi_{v, (\mathbf{d}, \mathbf{d}')} = \begin{cases}
    C(\mathbf{d}, \mathbf{d}')\expectation{\rv{v}^{\mathbf{d}-\mathbf{d}'}}, &\text{if } \mathbf{d}'\partialorder \mathbf{d};\\
    1, &\text{if } \mathbf{d}' = \mathbf{d};\\
    0, &\text{if } \mathbf{d}\partialorder \mathbf{d}'.
  \end{cases}
\end{equation*}
Notably, $\Phi_\nu$ is lower triangular with unit diagonal entries, 
hence it is invertible.
In view of \eqref{eq: sample moment} and denote $f_v( \rv{m}^{\mathrm{obs}}; m ) := \rv{m}^{\mathrm{obs}} - \Phi_\nu m$, we obtain the moment condition
\begin{align}\label{eq: moment function}
  \expectation{f_v( \rv{m}^{\mathrm{obs}}; \bar{m} )}=0
\end{align}
when $m=\bar{m}$.

\subsubsection{Additional moment constraints from system dynamics}
In addition to the moment information induced by observations,
the transition dynamics also impose additional structure on the occupation measure. 
More specifically, the occupation measure evolution \eqref{eq: occupation measure evolution} also implies certain moment conditions.
Let the time-shifted state occupation measure and its associated moment vector be
\begin{align*}
  \mu^{N,x+}\!\!:=\! \gamma\!\!\sum_{t=1}^{N} \alpha^{t-1}(\proj_x \mu_t),\quad 
  \bar{m}^{x+}\!:=\! \langle r, \mu^{N,x+} \rangle \!=\! \mathbb{E}_{\rv{x} \sim \mu^{N,x+}}[r(\rv{x})].
\end{align*}
Plugging \eqref{eq: occupation measure evolution} in the above equation yields
\begin{align*}
  \mu^{N,x+} = \gamma\sum_{t=1}^{N} \alpha^{t-1}(\proj_x \mu_t)
    = \gamma\sum_{t=0}^{N-1} \alpha^{t}\trsknl \mu_t =\trsknl \mu^N.
\end{align*}
Using the monomial basis $r$ of $P_{d_V}(X)$ as test functions, it holds that
\begin{align*}
  \bar{m}^{x+} = \langle r, \mu^{N,x+}\rangle = \langle r, \trsknl\mu^N\rangle = \langle \adj{\trsknl}r, \mu^N\rangle.
\end{align*}
And since $(\adj{\trsknl}r)(x,a) := \int_X r(x')\trsknl(\drm x'\mid x,a)$ is approximated by $G_2^\top\phi(x,a)$ in \eqref{eq: matrix definitions}, it follows from the above equation that
\begin{align}\label{eq: moment approximation}
  \bar{m}^{x+} = \langle \adj{\trsknl}r, \mu^N\rangle \approx 
  \int_{\scspace} G_2^\top\phi(x,a)\,\mu^N(\drm x,\drm a) = G_2^\top \bar{m}.
\end{align}
Furthermore, recall by Assumption~\ref{Asmp: i.i.d. obsv}, it holds that $\rv{y}_{t,x} = \rv{x}_t + \rv{v}_{t,x}$.
Then for each multi-index 
$\mathbf{d} \in \mathcal{D}_V:=\{\mathbf{d} \in \mathbb{N}^{n_x} \mid \|\mathbf{d}\|_1 \leq d_V\}$, define $\rv{m}_{\mathbf{d}}^{\mathrm{obs}, x+} := \gamma\sum_{t=1}^{N}\alpha^{t-1} \rv{y}_{t,x}^\mathbf{d}$; and
similar to \eqref{eq: moment system}, we stack all multi-indices $\mathbf{d} \in \mathcal{D}_V$ in the ascending order defined by $\partialorder$ and get a compact matrix form
\begin{align}\label{eq: shifted moment system}
  \bar{m}^{\mathrm{obs},x+} :=\expectation{\rv{m}^{\mathrm{obs},x+}} = \Phi_{\nu_x} \bar{m}^{x+},
\end{align}
where $\Phi_{\nu_x} \in \mathbb{R}^{D_V \times D_V}$ is the transformation matrix that corresponds to the addtive state noise distribution, and it is
constructed analogously to $\Phi_\nu$ with the moments of $\rv{v}_{t,x}$.
In view of \eqref{eq: moment approximation} and \eqref{eq: shifted moment system}, it holds that
\begin{align}\label{eq: dynamics moment condition}
  \expectation{\rv{m}^{\mathrm{obs},x+}} \approx \Phi_{\nu_x} G_2^\top \bar{m}.
\end{align}
Therefore, let
\begin{align*}
    f_c(\rv{m}^{\mathrm{obs},x+}; m) := \rv{m}^{\mathrm{obs},x+} - \Phi_{\nu_x} G_2^\top m,
\end{align*}
and together with \eqref{eq: shifted moment system}, when $m=\bar{m}$, it yields a misspecified moment condition due to the polynomial approximation error
\begin{align}\label{eq: f2 definition}
  \expectation{f_c(\rv{m}^{\mathrm{obs},x+}; \bar{m})} = \Phi_{\nu_x}(\bar{m}^{x+} - G_2^\top \bar{m}) =: \delta,
\end{align}
where $\delta$ quantifies the approximation error in \eqref{eq: moment approximation}.

\subsubsection{Combined misspecified GMM estimator}

We now combine the (misspecified) moment conditions \eqref{eq: moment function} and \eqref{eq: f2 definition} 
and construct a two-step GMM estimator for the moment vector $\bar{m}$.
In particular, given the $M$ i.i.d. sampled trajectories $\{(\rv{y}_{t,i})_{t=0}^{N}\}_{i=1}^{M}$, for each trajectory $i$, let
\begin{align*}
  \rv{m}_i^{\mathrm{obs}} = \gamma\sum_{t=0}^{N-1} \alpha^t \phi(\rv{y}_{t,i}), \;
  \rv{m}_i^{\mathrm{obs},x+} = \gamma\sum_{t=1}^{N} \alpha^{t-1} r(\rv{y}_{t,x,i}),
\end{align*}
where $\rv{y}_{t,x,i}$ denotes the state component of observation $\rv{y}_{t,i}$.
Note that the zero order moment is always 1, which contains no information.
In addition, since the first component of $\bar{m}$ corresponds to the zero order moment of $\mu^N$, it also takes the value of 1.  
Thus, we can safely remove them from the moment conditions. More specifically, we let
\begin{align*}
  \rv{b}_i := \begin{bmatrix}
    (\rv{m}_i^{\mathrm{obs}})_{[1:]}\\
    (\rv{m}_i^{\mathrm{obs},x+})_{[1:]}
  \end{bmatrix} - \begin{bmatrix}
    (\Phi_{\nu})_{[1:, 0]}\\
    (\Phi_{\nu_x}G_2^\top)_{[1:, 0]}
  \end{bmatrix}, 
  \Phi := \begin{bmatrix}
    (\Phi_{\nu})_{[1:, 1:]}\\
    (\Phi_{\nu_x}G_2^\top)_{[1:, 1:]}
  \end{bmatrix},
\end{align*}
where $\rv{b}_i$ is i.i.d. as $\rv{b}$, and $(\cdot)_{[1:]}$ deletes the first entry of a vector;
 $(\cdot)_{[1:, 0]}$ removes the first row of a matrix and then takes its first column;
 $(\cdot)_{[1:, 1:]}$ removes the first row and the first column of a matrix.
Moreover, let $m_+ := (m)_{[1:]}$ and denote
\begin{align}\label{eq: stacked moment function}
  f(\rv{b}_i;m_+) := 
    \rv{b}_i - \Phi m_+.
\end{align}
We employ a misspecified GMM procedure \citep{hall2003large} 
to estimate $\bar{m}_+:=(\bar m)_{[1:]}$, namely
\begin{align*}
  \hat{\rv{m}}_+^M := \argmin_{m_+ \in \mathbb{R}^{D_\psi-1}}\;\; \left\|g_M(\{\rv{b}_i\}_{i=1}^M;m_+)\right\|_W^2,
\end{align*}
where
\begin{align}\label{eq: sample moment average}
  g_M(\{\rv{b}_i\}_{i=1}^M;m_+) := \frac{1}{M}\sum_{i=1}^M f(\rv{b}_i;m_+),
\end{align}
and $W$ is a positive definite weight matrix to be specified.
In particular, for a correctly specified model \citep[Def.~1]{hall2003large}, 
taking $W$ as the inverse of the covariance matrix $\Sigma$ of $f(\rv{b},m_+)$ leads to 
an efficient estimator. To this end, we estimate the covariance matrix $\Sigma$ by
\begin{equation}\label{eq: covariance estimate}
  \begin{aligned}
  \hat{\Sigma}\!\! &:=\!\! \frac{1}{M}\sum_{i=1}^M\!\! 
  \left[f(\rv{b}_i;\bar{m}_+) - g_M(\{\rv{b}_i\}_{i=1}^M;\bar{m}_+)\right] 
  \left[f(\rv{b}_i;\bar{m}_+) - g_M(\{\rv{b}_i\}_{i=1}^M;\bar{m}_+)\right]^\top\\
  &= \frac{1}{M}\sum_{i=1}^M \rv{b}_i \rv{b}_i^\top - 
  \left(\frac{1}{M}\sum_{i=1}^M \rv{b}_i\right)\left(\frac{1}{M}\sum_{i=1}^M \rv{b}_i\right)^\top.
  \end{aligned}
\end{equation}
Notice that the ``true'' moment vector $\bar{m}_+$ disappears from the estimation of $\hat{\Sigma}$ and only the observed data $\{\rv{b}_i\}_{i=1}^M$ are needed for the covariance estimation.

However, for our misspecified case, there is no choice of weight matrix that yields an asymptotically efficient estimator 
\citep{hall2003large}.
Therefore, 
we adopt a block-diagonal structure for the weight matrix as
\begin{align}\label{eq: weighted matrix}
  \hat{W} = \left(\begin{bmatrix}
    \left(\hat\Sigma\right)_{[:D_\psi-1, :D_\psi-1]} &0\\
    0 & \left(\hat\Sigma\right)_{[D_\psi-1:, D_\psi-1:]}
  \end{bmatrix}+\lambda I\right)^{-1}
\end{align}
where $(\hat\Sigma)_{[:D_\psi-1, :D_\psi-1]}$ and $(\hat\Sigma)_{[D_\psi-1:, D_\psi-1:]}$
are the sample covariance estimates of 
$(\rv{m}_i^{\mathrm{obs}})_{[1:]}$ and $(\rv{m}_i^{\mathrm{obs}, x+})_{[1:]}$ respectively.
Moreover, this block-diagonal structure also has engineering benefits: 
$(\rv{m}_i^{\mathrm{obs}})_{[1:]}$ and $(\rv{m}_i^{\mathrm{obs}, x+})_{[1:]}$
typically exhibit high cross correlation, which would result 
in poor conditioning of the full covariance matrix estimate $\hat{\Sigma}$.
In addition, a small regularization term $\lambda I$ is added to ensure the numerical stability when taking the matrix inverse.
Consequently, the final estimator of the moment vector $\bar{m}_+$ is given by
\begin{align}\label{eq: moment estimator}
  \hat{\rv{m}}_+^M := \argmin_{m_+ \in \mathbb{R}^{D_\psi-1}} \left\|g_M(\{\rv{b}_i\}_{i=1}^M;m_+)\right\|_{\hat{W}}^2.
\end{align}

\subsection{The complete algorithm}

The complete procedure for IOC with noisy observations 
is summarized in Algorithm~\ref{alg:ioc_noisy}.
In particular, the algorithm consists of two main stages: 
(i) Moment estimation from corrupted data via the two-step GMM estimator \eqref{eq: moment estimator}; (ii) Cost recovery via solving \eqref{eq: finite dim ioc in expectation} with the estimated moments $\hat{m}$.

\begin{algorithm}[!htpb]
    \caption{Inverse Optimal Control with Noisy Observations}
    \label{alg:ioc_noisy}
    \begin{algorithmic}[1]
        \Require Noisy trajectory data $\{(y_{t,i})_{t=0}^{N}\}_{i=1}^{M}$, 
        noise distribution $\nu$ (or its moments $\expectation{\rv{v}^{\mathbf{d}}}$), 
        transition kernel $\trsknl(\cdot\mid x,a)$, 
        polynomial degrees $(d_\psi, d_V)$, 
        discount factor $\alpha$.
        \Ensure Cost parameters $\hat{\theta}_\ell$ and value function coefficients $\hat{\theta}_V$.
        \State Choose bases $\phi$ and $r$ with degrees $d_\psi$ and $d_V$, respectively.
        \State Construct the stacked moment condition $f(b_i;m)$ via \eqref{eq: stacked moment function}.
        \State Estimate the covariance $\hat{\Sigma}$ via \eqref{eq: covariance estimate} 
        and compute $\hat{W}$ via \eqref{eq: weighted matrix}.
        \State Calculate $\hat{m}^M$ by solving GMM \eqref{eq: moment estimator}.
        \State Solve \eqref{eq: finite dim ioc in expectation} 
          with estimated moments $\hat{m}^M$, i.e.,
        \begin{equation}\label{eq: final ioc optimization}
        \begin{aligned}
            &\min_{\theta_\psi, \theta_\ell, \theta_V} \quad \theta_\psi^\top \hat{m}^M, \\
            &\text{s.t. \eqref{eq: finite dim ioc-constr-1}-\eqref{eq: finite dim ioc-constr-4}}
        \end{aligned}
        \end{equation}
        \State \textbf{Return} $(\hat{\theta}_\ell, \hat{\theta}_V)$.
    \end{algorithmic}
\end{algorithm}

\section{Consistency analysis}
In this section, we establish the theoretical consistency of the proposed method.
The first lemma shows that the misspecified GMM estimator 
converges in probability to a limit close to the true moment vector $\bar{m}$, 
with the distance controlled by the polynomial approximation error.
\begin{lemma}\label{lem: estimator-converge-in-p} 
  Under Assumption~\ref{Asmp: i.i.d. obsv}, the estimator $\hat{\rv{m}}_+^M$ converges in probability as $M$ tends to infinity. 
  Moreover, let the limit be denoted as $\tilde{m}_+$, 
  then
  it holds that $\left\| \bar{m}_+ - \tilde{m}_+ \right\|_2 \le C\left\| \adj{\trsknl}r  
- G_2^\top\phi \right\|_\infty$,
  where $C$ is a constant that depends on the observation noise distribution $\nu$ and 
  the ground truth occupation measure $\mu^N$.
\end{lemma}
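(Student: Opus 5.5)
The estimator has a closed form, so the plan is to write it out explicitly and pass to the limit. Because $f(\rv{b}_i;m_+)=\rv{b}_i-\Phi m_+$ is affine in $m_+$, the objective in \eqref{eq: moment estimator} is the quadratic $\|\bar{\rv{b}}_M-\Phi m_+\|_{\hat W}^2$, where $\bar{\rv{b}}_M:=\frac1M\sum_{i=1}^M\rv{b}_i$. The first block of $\Phi$ is $(\Phi_\nu)_{[1:,1:]}$. This block is lower triangular with unit diagonal, so $\Phi$ has full column rank. Since $\hat W$ is positive definite (the $\lambda I$ term guarantees this), the minimizer is unique:
\begin{align*}
\hat{\rv{m}}_+^M=(\Phi^\top\hat W\Phi)^{-1}\Phi^\top\hat W\,\bar{\rv{b}}_M .
\end{align*}

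For convergence in probability, I use that the trajectories are i.i.d. by Assumption~\ref{Asmp: i.i.d. obsv}. The state-action pairs live in a compact set, and $\nu$ has finite moments of the orders needed (I would assume this implicitly, since the noise moments are used in $\Phi_\nu$ anyway). Hence $\rv{b}_i$ has finite second moments. The weak law of large numbers then gives $\bar{\rv{b}}_M\to\bar b:=\mathbb{E}[\rv{b}]$ in probability. It also gives $\hat\Sigma\to\Sigma$ in probability, so $\hat W\to W:=(\mathrm{blkdiag}(\Sigma)+\lambda I)^{-1}$, which is positive definite. Matrix inversion is continuous on the set where $\Phi^\top W\Phi$ is invertible, so the continuous mapping theorem yields
\begin{align*}
\hat{\rv{m}}_+^M\to\tilde m_+:=(\Phi^\top W\Phi)^{-1}\Phi^\top W\bar b .
\end{align*}

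For the bias bound, I first note from \eqref{eq: moment function} and \eqref{eq: f2 definition}, after removing the zeroth components with $\bar m_0=1$, that $\bar b=\Phi\bar m_+ + \begin{bmatrix}0\\ \delta_{[1:]}\end{bmatrix}$. Substituting gives
\begin{align*}
\tilde m_+-\bar m_+=(\Phi^\top W\Phi)^{-1}\Phi^\top W\begin{bmatrix}0\\ \delta_{[1:]}\end{bmatrix},
\end{align*}
so $\|\tilde m_+-\bar m_+\|_2\le \|(\Phi^\top W\Phi)^{-1}\Phi^\top W\|_2\,\|\delta\|_2$. It remains to bound $\delta=\Phi_{\nu_x}\langle \adj{\trsknl}r-G_2^\top\phi,\mu^N\rangle$. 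Since $\mu^N$ is a probability measure, each component is bounded by the sup norm, giving $\|\delta\|_2\le \sigma_{\max}(\Phi_{\nu_x})\sqrt{D_V}\,\|\adj{\trsknl}r-G_2^\top\phi\|_\infty$. The constant $C$ collects $\sigma_{\max}(\Phi_{\nu_x})\sqrt{D_V}$ and the operator norm above. The latter can be bounded by $\sigma_{\max}(W)\sigma_{\max}(\Phi)/\big(\sigma_{\min}(W)\sigma_{\min}(\Phi)^2\big)$, or more simply by $\sqrt{\kappa(W)}/\sigma_{\min}(\Phi)$ via the $W$-weighted projection.

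The main obstacle is the constant. $W$ depends on $\Sigma$, which depends on $\nu$ and on the law of the trajectories, and hence on more than $\mu^N$ itself. $\Phi$ also contains $G_2$, which depends on the approximation. To keep $C$ from depending on $G_2$, I would lower-bound $\sigma_{\min}(\Phi)$ by $\sigma_{\min}((\Phi_\nu)_{[1:,1:]})$. This works because appending rows can only increase the smallest singular value, so that factor depends only on $\nu$. The condition number of $W$ is handled by $\sigma_{\min}(W)\ge(\sigma_{\max}(\Sigma)+\lambda)^{-1}$ and $\sigma_{\max}(W)\le\lambda^{-1}$. These bounds depend on the noise and the true process, matching the statement's description of $C$ loosely. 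A further subtlety is that $\Sigma$ itself involves $G_2$ through $\rv{b}$, but only as a deterministic shift, so $\Sigma$ is in fact independent of $G_2$.
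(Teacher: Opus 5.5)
Your proposal is correct and follows the paper's proof. It uses the same closed-form weighted least-squares solution, the same law-of-large-numbers and continuity argument giving $\tilde m_+=(\Phi^\top\bar W\Phi)^{-1}\Phi^\top\bar W\,\mathbb{E}[\rv{b}]$, and the same bias bound through the misspecification term $\delta$ with the factor $\sigma_{\max}(\Phi_{\nu_x})\sqrt{D_V}$. The differences are minor: you bound $\tilde m_+-\bar m_+$ directly through the weighted pseudo-inverse rather than through the paper's comparison $J(\tilde m_+)\le J(\bar m_+)$ plus the triangle inequality, which avoids the paper's factor $2$; and your bound $\sigma_{\min}(\Phi)\ge\sigma_{\min}((\Phi_\nu)_{[1:,1:]})$ removes the dependence on $G_2$ that the paper's constant keeps.
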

\begin{proof}
  Note that $\Phi$ has full column rank since $\Phi_\nu$ is invertable.
  Thus, $\Phi^\top \hat{W} \Phi$ is strictly positive definite and hence the weighted least square problem \eqref{eq: moment estimator} has a closed form solution
  \begin{align*}
    \hat{\rv{m}}_+^M = (\Phi^\top \hat{W} \Phi)^{-1}\Phi^\top \hat{W}\left( \sum_{i=1}^M \rv{b}_i/M \right).
  \end{align*}
  Recall $\rv{b}_i$ is i.i.d. as $\rv{b}$, it holds that $ \plim_{M\rightarrow\infty} \sum_{i=1}^M \rv{b}_i/M = \expectation{\rv{b}}$, 
  $\plim_{M\rightarrow\infty} \hat{W} =\bar{W}$, where
  \begin{align}\label{eq: limit of weighted matrix}
    \bar{W}:= \left(\begin{bmatrix}
    \cov\left[ (\rv{b})_{[:D_\psi-1]} \right] &0\\
    0 & \cov\left[ (\rv{b})_{[D_\psi-1:]} \right]
  \end{bmatrix}+\lambda I\right)^{-1}.
  \end{align}
  And since $\hat{\rv{m}}_+^M$ is continuous with respect to $\sum_{i=1}^M \rv{b}_i/M$ and $\hat{W}$, 
  it follows that $\plim_{M\rightarrow\infty} \hat{\rv{m}}_+^M = \tilde{m}_+ := (\Phi^\top \bar{W} \Phi)^{-1}\Phi^\top \bar{W} \expectation{\rv{b}}$ as $M$ tends infinity.

  Next, recall the notation in \eqref{eq: stacked moment function} and consider the function $J(m_+):=\|f(\expectation{\rv{b}};m_+)\|_{\bar{W}}^2$.
  Note that for the ``true" moment vector $\bar{m}_+$, it holds that
  \begin{align*}
    f(\expectation{\rv{b}};\bar{m}_+)= \begin{pmatrix}
      0\\
      (\Phi_{\nu_x}\langle \adj{\trsknl}r -G_2^\top\phi,\mu^N\rangle)_{[1:]}
    \end{pmatrix},
  \end{align*}
  where the upper block vanishes due to the correctly specified moment condition \eqref{eq: moment function},
  and the lower block represents the approximation error from \eqref{eq: moment approximation}. 
  Therefore, it holds that  
  \begin{equation}
  \begin{aligned}
    &\|f(\expectation{\rv{b}};\bar{m}_+)\|_2 =
    \left\| (\Phi_{\nu_x}\langle G_2^\top\phi - \adj{\trsknl}r ,\mu^N\rangle)_{[1:]} \right\|_2\\
    &\le \left\|\Phi_{\nu_x}\langle G_2^\top\phi - \adj{\trsknl}r ,\mu^N\rangle\right\|_2\le \sigma_{\max}(\Phi_{\nu_x})\left\|\langle \adj{\trsknl}r  
                         - G_2^\top\phi,\mu^N\rangle\right\|_2\\
    &\le\! \sigma_{\max}(\Phi_{\nu_x})\!\left\|\adj{\trsknl}r  
                         \!-\! G_2^\top\phi\right\|_{\infty,2}\!\le\! \sigma_{\max}(\Phi_{\nu_x})\!\sqrt{D_V}\left\| \adj{\trsknl}r  
                         \!-\! G_2^\top\phi \right\|_\infty,
  \end{aligned}
  \label{eq: f_inequalities}
  \end{equation}
  where we define the mixed norm $\|f\|_{\infty,2} := \sqrt{\sum_{i=0}^{n-1}(\sup_{\scpair\in\scspace}|(f(\eta))_{[i]}|)^2}$ 
  for vector-valued functions $f:\scspace\rightarrow\mathbb{R}^{n}$,
  and the third inequality results from the fact that $\mu^N$ is a probability measure.

  Since $\tilde{m}_+$ minimizes $J(m_+)$, it follows that
  \begin{align}
    0\le \underbrace{\|f(\expectation{\rv{b}};\tilde{m}_+)\|_{\bar{W}}^2}_{J(\tilde{m}_+)}\le \underbrace{\|f(\expectation{\rv{b}};\bar{m}_+)\|_{\bar{W}}^2}_{J(\bar{m}_+)}. 
    \label{eq: J_inequality_optimality}
  \end{align}
  On the other hand, in view of \eqref{eq: limit of weighted matrix}, it holds that $\sigma_{\min}(\bar{W}) > 0$.
  Also note that $f(\mE[\rv{b}];\tilde{m}_+)-f(\mE[\rv{b}];\bar{m}_+) = \Phi(\bar{m}_+ - \tilde{m}_+)$, hence by \eqref{eq: J_inequality_optimality} and triangular inequality, it holds that
  \begin{align}
    &\sqrt{\sigma_{\min}(\bar{W})}\|\Phi(\tilde{m}_+ - \bar{m}_+)\|_2 \le \|\bar{W}^{1/2} [f(\expectation{\rv{b}};\tilde{m}_+)\!-\!f(\expectation{\rv{b}};\bar{m}_+)]\|_2 \nonumber\\
    &\le \|\bar{W}^{1/2} f(\expectation{\rv{b}};\tilde{m}_+) \|_2 + \|\bar{W}^{1/2} f(\expectation{\rv{b}};\bar{m}_+) \|_2\nonumber\\
    &\overset{\eqref{eq: J_inequality_optimality}}{\le} 2\|\bar{W}^{1/2} f(\expectation{\rv{b}};\bar{m}_+) \|_2\le 2\sqrt{\sigma_{\max}(\bar{W})}\|f(\expectation{\rv{b}};\bar{m}_+)\|_2.\label{eq: phi_delta_m_ineq}
  \end{align}
  Moreover, since $\Phi$ has full column rank, the matrix $\Phi^\top \Phi$ is positive definite, 
  and $\sigma_{\min}(\Phi) = \sqrt{\sigma_{\min}(\Phi^\top \Phi)} > 0$.
  Consequently, $\|\Phi(\bar{m}_+-\tilde{m}_+)\|_2 \ge \sigma_{\min}(\Phi)\|\bar{m}_+-\tilde{m}_+\|_2$.
  Combining this with \eqref{eq: phi_delta_m_ineq} yields
  \begin{align*}
    \|\bar{m}_+ - \tilde{m}_+\|_2 \le 
    \frac{2\sqrt{\sigma_{\max}(\bar{W})}}{\sigma_{\min}(\Phi)\sqrt{\sigma_{\min}(\bar{W})}}
    \|f(\expectation{\rv{b}};\bar{m}_+)\|_2.
    \label{eq: m_tilde_m_bar_inequality}
  \end{align*} 
  Combining the above inequality with \eqref{eq: f_inequalities} yields
  $\|\bar{m}_+-\tilde{m}_+\|_2\le C\left\| \adj{\trsknl}r - G_2^\top\phi \right\|_\infty$,  where
  \begin{align*}
    C := \frac{2\sqrt{\sigma_{\max}(\bar{W})D_V}\,\sigma_{\max}(\Phi_{\nu_x})}
    {\sqrt{\sigma_{\min}(\bar{W})}\,\sigma_{\min}(\Phi)},
  \end{align*}
  which completes the proof.
\end{proof}

The next lemma quantifies how moment estimation errors propagate through the IOC optimization.
Specifically, it evaluates the optimal solutions of both the true problem \eqref{eq: finite dim ioc in expectation} and the estimated problem \eqref{eq: final ioc optimization} 
using the true moment vector $\bar{m}$, and shows that their objective value difference is controlled by the moment estimation error.
\begin{lemma}\label{lem: error propagation}
  The objective value difference of \eqref{eq: ioc_algorithm_obj_fun} evaluated at the optimal solutions of \eqref{eq: finite dim ioc in expectation} 
  and \eqref{eq: final ioc optimization} is controlled by the moment estimation error $\|\bar{m} - \hat{m}^M\|_\infty$.
  More specifically, let $(\optm{\theta}_\psi, \optm{\theta}_\ell, \optm{\theta}_V)$
  and $(\hat{\theta}_\psi^M, \hat{\theta}_\ell^M, \hat{\theta}_V^M)$
  denote the optimal solutions to
  \eqref{eq: finite dim ioc in expectation} and \eqref{eq: final ioc optimization}, respectively,
  and let $\Xi_\psi$ be defined as in \eqref{eq: finite dim ioc-constr-1}.
  Then it holds that
  \begin{align}
    |(\optm{\theta}_\psi - \hat{\theta}_\psi^M)^\top \bar{m}| 
    \le 2\|\bar{m} - \hat{m}^M\|_\infty \|\Xi_\psi\|_1(\beta_V' + \beta_\ell).
  \end{align}
\end{lemma}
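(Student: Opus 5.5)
The plan is the standard perturbation argument for linear programs whose objective vector is perturbed but whose feasible set is fixed. The key observation is that \eqref{eq: finite dim ioc in expectation} and \eqref{eq: final ioc optimization} share exactly the same feasible set, defined by \eqref{eq: finite dim ioc-constr-1}--\eqref{eq: finite dim ioc-constr-4}, because the moment vector enters only the objective. Hence $\optm{\theta}_\psi$ is feasible for \eqref{eq: final ioc optimization} and $\hat{\theta}_\psi^M$ is feasible for \eqref{eq: finite dim ioc in expectation}. Writing $\Delta := \bar{m}-\hat{m}^M$, I would use optimality of each solution for its own problem:
\begin{align*}
  \optm{\theta}_\psi{}^\top \bar{m} \le \hat{\theta}_\psi^{M\top}\bar{m}, \qquad
  \hat{\theta}_\psi^{M\top}\hat{m}^M \le \optm{\theta}_\psi{}^\top \hat{m}^M .
\end{align*}

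The first inequality immediately gives $(\optm{\theta}_\psi-\hat{\theta}_\psi^M)^\top\bar{m}\le 0$, so only the lower bound needs work. Substituting $\hat{m}^M=\bar{m}-\Delta$ into the second inequality and rearranging yields
\begin{align*}
(\hat{\theta}_\psi^M-\optm{\theta}_\psi)^\top\bar{m} \le (\hat{\theta}_\psi^M-\optm{\theta}_\psi)^\top\Delta \le |\hat{\theta}_\psi^{M\top}\Delta| + |\optm{\theta}_\psi{}^\top\Delta|.
\end{align*}
Combining the two bounds gives $|(\optm{\theta}_\psi-\hat{\theta}_\psi^M)^\top\bar{m}| \le |\hat{\theta}_\psi^{M\top}\Delta| + |\optm{\theta}_\psi{}^\top\Delta|$.

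The remaining step bounds each of these terms uniformly over the feasible set. By Hölder's inequality, $|\theta_\psi^\top\Delta|\le \|\theta_\psi\|_1\|\Delta\|_\infty$. For any feasible point, \eqref{eq: finite dim ioc-constr-1} gives $\theta_\psi=\Xi_\psi[\theta_\ell;\theta_V]$, so
\begin{align*}
\|\theta_\psi\|_1\le \|\Xi_\psi\|_1\,\|[\theta_\ell;\theta_V]\|_1 = \|\Xi_\psi\|_1\bigl(\|\theta_\ell\|_1+\|\theta_V\|_1\bigr)\le \|\Xi_\psi\|_1(\beta_\ell+\beta_V'),
\end{align*}
where $\|\Xi_\psi\|_1$ is the induced $\ell_1$ operator norm and the last bound uses \eqref{eq: finite dim ioc-constr-4}. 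Applying this to both $\optm{\theta}_\psi$ and $\hat{\theta}_\psi^M$ produces the factor $2$ and hence the claimed estimate.

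The argument is essentially routine. The only point requiring care is the existence of the minimizers referenced in the statement. The feasible set is closed, since it is an intersection of linear constraints, the norm balls, and the closed convex cone $\{\theta_\psi:\theta_\psi^\top\phi\ge0\}$. It is also bounded, by the computation above. Its nonemptiness should follow from Proposition~\ref{prop: optimal value zero} under the approximation assumptions, or it can simply be taken as given since the statement presupposes optimal solutions. Compactness then guarantees the minima are attained. I would also check that the norm convention $\|\Xi_\psi\|_1$ matches the induced $\ell_1\to\ell_1$ norm, which is exactly what the Hölder step requires.
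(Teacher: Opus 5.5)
Your proposal is correct and follows essentially the same route as the paper. Both arguments use that the two problems share a feasible set and combine the two optimality inequalities. Both then bound $|\theta_\psi^\top(\bar{m}-\hat{m}^M)|$ uniformly over feasible points via H\"older and the norm constraints, which produces the factor $2$. Your extra remark on attainment of the minima (compactness of the feasible set) is a harmless addition that the paper leaves implicit.
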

\begin{proof}
  For any feasible $\theta_\psi$ that satisfies the constraint \eqref{eq: finite dim ioc-constr-1},
  it follows from the norm bound \eqref{eq: finite dim ioc-constr-4} that
  \begin{align*}
    |(\bar{m} - \hat{m})^\top\theta_\psi| 
    &\le \|\bar{m} - \hat{m}^M\|_\infty \|\theta_\psi\|_1
    \le \|\bar{m} - \hat{m}^M\|_\infty \|\Xi_\psi\|_1(\beta_V' + \beta_\ell).
  \end{align*}
  Since $\optm{\theta}_\psi$ is optimal to \eqref{eq: finite dim ioc in expectation},
  it holds that $ \bar{m}^\top\optm{\theta}_\psi \le \bar{m}^\top\hat{\theta}_\psi^M $.
  Similarly, since $\hat{\theta}_\psi^M$ is optimal to \eqref{eq: final ioc optimization},
  we have $ \hat{m}^{M,\top}\hat{\theta}_\psi^M\le \hat{m}^{M,\top}\optm{\theta}_\psi $.
  Combining these optimality conditions with the bound derived above, it follows that
  \begin{align*}
    \bar{m}^\top\hat{\theta}_\psi^M&\le \hat{m}^{M,\top}\hat{\theta}_\psi^M + (\bar{m} - \hat{m})^\top\hat{\theta}_\psi^M
    \le \hat{m}^{M,\top}\hat{\theta}_\psi^M + |(\bar{m} - \hat{m})^\top\hat{\theta}_\psi^M|\\
    &\le \hat{m}^{M,\top}\hat{\theta}_\psi^M + \|\bar{m} - \hat{m}\|_\infty \|\Xi_\psi\|_1(\beta_V' + \beta_\ell)\\
    &\le \hat{m}^{M,\top}\optm{\theta}_\psi + \|\bar{m} - \hat{m}\|_\infty \|\Xi_\psi\|_1(\beta_V' + \beta_\ell)\\
    &\le \bar{m}^\top\optm{\theta}_\psi + (\hat{m} - \bar{m})^\top\theta_\psi^\star+\|\bar{m} - \hat{m}\|_\infty \|\Xi_\psi\|_1(\beta_V' + \beta_\ell)\\
    &\le \bar{m}^\top\optm{\theta}_\psi + 2\|\bar{m} - \hat{m}\|_\infty \|\Xi_\psi\|_1(\beta_V' + \beta_\ell),
  \end{align*}
  and the statement follows.
\end{proof}

The following lemma addresses the impact of replacing continuous functions (such as $\varphi, \adj{Q}r$) with their polynomial approximations.
It establishes that any feasible solution to the approximated problem can be adjusted to remain feasible in the original problem,
with the objective value difference controlled by the approximation error.
\begin{lemma}\label{lem: basis approximation feasibility}
  Consider the optimization problem
  \begin{equation}\label{eq: lemma2orig}
  \begin{aligned}
    \min_{\theta\in\mathbb{R}^n}\;\; &\langle \theta^\top h , \mu\rangle,\\
    \text{s.t. }\;\; & \theta^\top h \ge 0,
    \int_\scspace \theta^\top h \drm \eta \ge 1,
    \|\theta\|_1 \le \beta,
  \end{aligned}
  \end{equation}
  where $\mu$ is a probability Borel measure on $\scspace$ and
  $h:\scspace\rightarrow\mathbb{R}^n$ is a continuous vector-valued function.
  Suppose $\tilde{h}:\scspace\rightarrow\mathbb{R}^n$ is such that $\|h -\tilde{h}\|_\infty\le 1/2$, 
  and consider the approximated optimization problem
  \begin{equation}\label{eq: lemma2appr}
  \begin{aligned}
    \min_{\theta\in\mathbb{R}^n}\;\; &\langle \theta^\top \tilde{h} , \mu\rangle,\\
    \text{s.t. }\;\; & \theta^\top \tilde{h} \ge 0,
     \int_\scspace \theta^\top \tilde{h}\drm \eta \ge 1,
     \|\theta\|_1 \le \beta/2.
  \end{aligned}
  \end{equation}
  Suppose there exists $\theta_0 \in \mathbb{R}^n$ such that 
  $\theta_0^\top\tilde{h}(\eta) \ge 1$ and $\theta_0^\top h(\eta) \ge 1$ for all $\eta\in\scspace$, 
  and $\|\theta_0\|_1\le 1$.
  Then for any $\theta$ that is feasible to \eqref{eq: lemma2appr},
  let $a := \|\theta\|_1\|h - \tilde{h}\|_\infty$; it holds that $\theta + a\theta_0$ is feasible to \eqref{eq: lemma2orig}.
  Moreover, the corresponding objective values of \eqref{eq: lemma2orig} and \eqref{eq: lemma2appr} satisfy
  \begin{align*}
    \langle(\theta + a\theta_0)^\top h - \theta^\top\tilde{h}, \mu \rangle 
    \le\frac{\beta}{2}\left(1+\|h\|_\infty\right)\|h - \tilde{h}\|_\infty.
  \end{align*}
  Furthermore, denote $\hat{J}$ as the optimal value of \eqref{eq: lemma2appr}. If \eqref{eq: lemma2orig} has an optimal solution $\optm{\theta}$ 
  with $\|\optm{\theta}\|_1 \le \beta/4$, then it holds that 
  \begin{align*}
    0\le \hat{J} \le \langle h^\top\optm{\theta}, \mu\rangle 
    + \frac{\beta}{4}(1+\left\|h\right\|_\infty)\left\|h - \tilde{h}\right\|_\infty.
  \end{align*}
\end{lemma}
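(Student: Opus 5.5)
The plan is a direct perturbation argument. The anchor $\theta_0$, which is uniformly positive against both $h$ and $\tilde h$, absorbs the pointwise error introduced by switching between $h$ and $\tilde h$. Throughout, $\|\cdot\|_\infty$ of a vector-valued function means the supremum over $\eta$ of the largest component in absolute value. Hence the Hölder bound $|\theta^\top(h-\tilde h)(\eta)|\le\|\theta\|_1\|h-\tilde h\|_\infty$ holds pointwise, and $|\theta^\top h(\eta)|\le \|\theta\|_1\|h\|_\infty$.

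\emph{Feasibility.} Let $\theta$ be feasible for \eqref{eq: lemma2appr} and set $a=\|\theta\|_1\|h-\tilde h\|_\infty$. Pointwise,
\begin{align*}
(\theta+a\theta_0)^\top h=\theta^\top\tilde h+\theta^\top(h-\tilde h)+a\,\theta_0^\top h\ge 0-a+a=0.
\end{align*}
For the integral constraint, write $|\scspace|$ for the Lebesgue volume, which is finite by compactness. The same decomposition together with $\theta_0^\top h\ge 1$ pointwise gives
\begin{align*}
\int(\theta+a\theta_0)^\top h\,\drm\eta\ge 1-a|\scspace|+a|\scspace|=1.
\end{align*}
For the norm, $\|\theta+a\theta_0\|_1\le \beta/2+a\le \beta/2+(\beta/2)(1/2)\le\beta$, using $\|h-\tilde h\|_\infty\le 1/2$ and $\|\theta_0\|_1\le1$.

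\emph{Objective gap.} Integrating against the probability measure $\mu$,
\begin{align*}
\langle(\theta+a\theta_0)^\top h-\theta^\top\tilde h,\mu\rangle=\langle\theta^\top(h-\tilde h),\mu\rangle+a\langle\theta_0^\top h,\mu\rangle\le \|\theta\|_1\|h-\tilde h\|_\infty+a\|h\|_\infty .
\end{align*}
The right-hand side equals $\|\theta\|_1(1+\|h\|_\infty)\|h-\tilde h\|_\infty\le\frac{\beta}{2}(1+\|h\|_\infty)\|h-\tilde h\|_\infty$.

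\emph{Bound on $\hat J$.} The lower bound $\hat J\ge0$ is immediate, since feasible points satisfy $\theta^\top\tilde h\ge0$ and $\mu$ is nonnegative. For the upper bound I run the same construction in the reverse direction. Take $\optm\theta$ optimal for \eqref{eq: lemma2orig} with $\|\optm\theta\|_1\le\beta/4$, set $a'=\|\optm\theta\|_1\|h-\tilde h\|_\infty$, and check that $\optm\theta+a'\theta_0$ is feasible for \eqref{eq: lemma2appr}. Nonnegativity and the integral constraint follow exactly as above, now using $\theta_0^\top\tilde h\ge1$. The norm constraint follows from $\beta/4+\beta/8\le\beta/2$. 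Then $\hat J\le\langle(\optm\theta+a'\theta_0)^\top\tilde h,\mu\rangle=\langle \optm\theta^\top h,\mu\rangle+\langle\optm\theta^\top(\tilde h-h),\mu\rangle+a'\langle\theta_0^\top\tilde h,\mu\rangle$.

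The main obstacle is this last term. It naturally produces $\|\tilde h\|_\infty$ rather than $\|h\|_\infty$. Bounding $\theta_0^\top\tilde h\le\|h\|_\infty+\|h-\tilde h\|_\infty$ yields $\frac{\beta}{4}(1+\|h\|_\infty+\|h-\tilde h\|_\infty)\|h-\tilde h\|_\infty$. This exceeds the stated constant by a second-order term, which is at most $\frac{\beta}{8}\|h-\tilde h\|_\infty$ since $\|h-\tilde h\|_\infty\le 1/2$.

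I would first try to remove it by choosing a smaller shift. Feasibility in the approximate problem only needs $a'\ge \|\optm\theta\|_1\|h-\tilde h\|_\infty$ against $\theta_0^\top\tilde h\ge1$, so no slack is available there. The alternative is to split $\theta_0^\top\tilde h=\theta_0^\top h+\theta_0^\top(\tilde h-h)$ and absorb the cross term into the $\langle\optm\theta^\top(\tilde h-h),\mu\rangle$ contribution, or into the factor $1$, using $\|\theta_0\|_1\le1$. If that fails, I would state the bound with $\|\tilde h\|_\infty$, or with the harmless $O(\|h-\tilde h\|_\infty^2)$ correction. This suffices for the subsequent consistency argument, where $\|h-\tilde h\|_\infty\to0$.
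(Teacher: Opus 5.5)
Your Parts 1 and 2 are correct and are essentially the paper's argument. The only difference is cosmetic: the paper gets the integral constraint directly from the pointwise inequality $(\theta+a\theta_0)^\top h\ge\theta^\top\tilde h$ rather than tracking $|\scspace|$. For the bound on $\hat J$ you use the same reverse shift $\optm{\theta}+a'\theta_0$ as the paper. The obstacle you ran into is genuine and does not come from your argument. The paper's own proof ends with $\frac{\beta}{4}(1+\|\tilde h\|_\infty)\|h-\tilde h\|_\infty$, so it does not establish the $\|h\|_\infty$ version written in the statement either.

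Do not spend effort on the absorption idea, because the $\|h\|_\infty$ version is false. Take the following data:
\begin{itemize}
\item $n=1$, $\theta_0=1$, and $\epsilon\in(0,\tfrac12]$;
\item $h\equiv 1+\epsilon$;
\item $\tilde h\equiv 1$ except $\tilde h(\eta_0)=1+2\epsilon$ (a narrow continuous bump of that height works equally well);
\item $\mu=\delta_{\eta_0}$ and $\beta=4/((1+\epsilon)|\scspace|)$.
\end{itemize}
All hypotheses hold, with $\|h-\tilde h\|_\infty=\epsilon$.

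The original problem is solved by $\optm{\theta}=1/((1+\epsilon)|\scspace|)=\beta/4$, with $\langle h^\top\optm{\theta},\mu\rangle=1/|\scspace|$. The approximate problem is solved by $\hat\theta=1/|\scspace|\le\beta/2$, giving $\hat J=(1+2\epsilon)/|\scspace|$. Hence
\[
\hat J-\langle h^\top\optm{\theta},\mu\rangle=\frac{2\epsilon}{|\scspace|}>\frac{(2+\epsilon)\epsilon}{(1+\epsilon)|\scspace|}=\frac{\beta}{4}\bigl(1+\|h\|_\infty\bigr)\|h-\tilde h\|_\infty .
\]
With $\|\tilde h\|_\infty=1+2\epsilon$ in place of $\|h\|_\infty$, the right-hand side equals $2\epsilon/|\scspace|$ exactly.

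So the correct third claim carries $\|\tilde h\|_\infty$, and that version is tight. Your fallback is the right statement, and your bound with $\|h\|_\infty+\|h-\tilde h\|_\infty$ is implied by it. Since $\|\tilde h\|_\infty\le\|h\|_\infty+\tfrac12$, the change only affects constants in the consistency theorem, where $\|h-\tilde h\|_\infty\to0$.
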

\begin{proof}
  \textbf{Part 1 (Feasibility transfer):}
  For any $\theta$ that is feasible to \eqref{eq: lemma2appr}, it follows that
  \begin{align*}
    \theta^\top h = \theta^\top\tilde{h} + \theta^\top(h - \tilde{h})
      \ge \theta^\top\tilde{h} - \underbrace{\|\theta\|_1\|h - \tilde{h}\|_\infty}_{a}.
  \end{align*}
  Then it holds from the above inequality that
  \begin{align*}
    (\theta+a\theta_0)^\top h= \theta^\top h + a\underbrace{\theta_0^\top h}_{\ge 1} \ge \theta^\top h + a\ge \theta^\top\tilde{h}\ge 0,
  \end{align*}
  Furthermore, it follows that $\int (\theta+a\theta_0)^\top h \drm\scpair \ge \int \theta^\top \tilde{h}\ge 1$ and
  \begin{align*}
  \|\theta+a\theta_0\|_1 \le \|\theta\|_1+a\|\theta_0\|_1\le 
    \beta/2 + \underbrace{\|h - \tilde{h}\|_\infty}_{\le 1/2}\underbrace{\|\theta_0\|_1}_{\le 1}\beta/2 \le \beta.
  \end{align*}
  Hence $(\theta+a\theta_0)$ is feasible to \eqref{eq: lemma2orig}.

  \textbf{Part 2 (Objective value proximity):}
  Since $\mu$ is a probability measure, it holds that
  \begin{align*}
    &\langle(\theta + a\theta_0)^\top h - \theta^\top\tilde{h}, \mu \rangle 
    \le \left\|(\theta + a\theta_0)^\top h - \theta^\top\tilde{h}\right\|_\infty\\
    &\le \left\|\theta^\top (h - \tilde{h})\right\|_\infty + 
        \left\|a\theta_0^\top h\right\|_\infty
    \le \|\theta\|_1\left\|h - \tilde{h}\right\|_\infty + 
        a\left\|\theta_0\right\|_1\left\|h\right\|_\infty\\
    &\le \beta\left\|h - \tilde{h}\right\|_\infty/2 + 
        a\underbrace{\|\theta_0\|_1}_{\le 1}\left\|h\right\|_\infty
    \le \frac{\beta}{2}(1+\left\|h\right\|_\infty)\left\|h - \tilde{h}\right\|_\infty.
  \end{align*}
  
  \textbf{Part 3 (Optimal value bound):}
  Conversely, let $a':=\|\optm{\theta}\|_1\|h - \tilde{h}\|_\infty$. Since by assumption $\|\optm{\theta}\|_1 \le \beta/4$, it holds that
  $a' \le (\beta/4)\|h - \tilde{h}\|_\infty$.
  By an argument similar to Part 1, $\optm{\theta} + a'\theta_0$ is feasible to \eqref{eq: lemma2appr} with
  $\|\optm{\theta} + a'\theta_0\|_1 \le \beta/4 + \beta/8 < \beta/2$.

  On the other hand, by following a similar analysis as in Part 2, it holds that
  \begin{align*}
    &\langle(\optm{\theta} + a'\theta_0)^\top \tilde{h} - \theta^{\star\top}h, \mu \rangle 
    \le \frac{\beta}{4}(1+\left\|\tilde{h}\right\|_\infty)\left\|h - \tilde{h}\right\|_\infty.
  \end{align*}
  In view of the optimality of $\hat{J}$, the above inequality further implies that 
  \begin{align*}
    \hat{J}&\le\langle(\optm{\theta} + a'\theta_0)^\top\tilde{h}, \mu\rangle
    \le \langle  h^\top\optm{\theta}, \mu\rangle 
    + \frac{\beta}{4}\left(1+\|\tilde{h}\|_\infty\right)\|h - \tilde{h}\|_\infty.
  \end{align*}
\end{proof}
Combining the above lemmas, the following theorem establishes the main consistency result.
It shows that, under the triple limit of increasing sample size and polynomial approximation degrees,
the objective value achieved by the solution of \eqref{eq: final ioc optimization} converges to the optimal value of the original infinite-dimensional problem \eqref{eq: ioc infinite dim}.
\begin{theorem}
  Let $ (\hat{\rv{\theta}}_\psi, \hat{\rv{\theta}}_\ell, \hat{\rv{\theta}}_V) $ 
  be the optimal solution of \eqref{eq: final ioc optimization}, it holds that
  \begin{align*}
        \lim_{d_V\rightarrow\infty}\lim_{d_\psi\rightarrow\infty}\plim_{M\rightarrow\infty} \;
        \langle \hat{\rv{\theta}}_\ell^\top\varphi + \hat{\rv{\theta}}_V^\top(\alpha\adj{Q}r-r),
         \mu^N \rangle = 0,
  \end{align*}
\end{theorem}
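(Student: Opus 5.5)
We control the quantity in the statement by splitting it into three error terms and sending them to zero in the order of the limits, innermost ($M\to\infty$) first. Write $J(\theta_\ell,\theta_V):=\langle \theta_\ell^\top\varphi+\theta_V^\top(\alpha\adj{Q}r-r),\mu^N\rangle$. For fixed degrees, the true-function integrand $\theta_\ell^\top\varphi+\theta_V^\top(\alpha\adj Q r-r)$ and the surrogate $\theta_\psi^\top\phi=\theta_\ell^\top H^\top\phi+\theta_V^\top(\alpha G_2^\top-G_1^\top)\phi$ differ pointwise by at most
\[
(\|\theta_\ell\|_1+\|\theta_V\|_1)\max\{\|\varphi-H^\top\phi\|_\infty,\ \alpha\|\adj Q r-G_2^\top\phi\|_\infty\}=:(\beta_\ell+\beta_V')\,\epsilon_{d_\psi},
\]
using $\adj\proj r=G_1^\top\phi$ exactly. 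Hence
\[
|J(\hat\theta_\ell,\hat\theta_V)-\hat\theta_\psi^\top\bar m|\le(\beta_\ell+\beta_V')\epsilon_{d_\psi}.
\]
We then bound $\hat\theta_\psi^\top\bar m$ via Lemma~\ref{lem: error propagation}:
\[
\hat\theta_\psi^\top\bar m\le \optm\theta_\psi{}^\top\bar m+2\|\bar m-\hat m^M\|_\infty\|\Xi_\psi\|_1(\beta_V'+\beta_\ell).
\]

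\textbf{Step 1 ($M\to\infty$).} By Lemma~\ref{lem: estimator-converge-in-p}, $\hat m^M\to\tilde m$ in probability, with $\|\bar m-\tilde m\|\le C\|\adj Q r-G_2^\top\phi\|_\infty$ (the zeroth component is exactly $1$ in both). Continuity of the bound in $\hat m^M$ then gives
\[
\plimsup_{M}|J(\hat\theta_\ell,\hat\theta_V)-\optm\theta_\psi{}^\top\bar m|\le(\beta_\ell+\beta_V')\epsilon_{d_\psi}+2C\|\Xi_\psi\|_1(\beta_V'+\beta_\ell)\|\adj Q r-G_2^\top\phi\|_\infty.
\]
The hidden issue is that the objective is nonnegative only on the true side; I would also lower-bound $J$ using $\hat\theta_\psi^\top\phi\ge0$ and the same pointwise error, giving $J\ge-(\beta_\ell+\beta_V')\epsilon_{d_\psi}$.

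\textbf{Step 2 ($d_\psi\to\infty$, $d_V$ fixed).} $\varphi$ and $\adj Q r$ are continuous on the compact set $\scspace$ (weak Feller), so by Stone--Weierstrass one can choose $H,G_2$ with $\epsilon_{d_\psi}\to0$. The main obstacle is that $C$ and $\|\Xi_\psi\|_1$ depend on $d_\psi$ (through $\Phi$, $\bar W$, and $H$, $G_2$), so their product with the approximation error need not vanish. I would first try to keep $\|\Xi_\psi\|_1$ bounded by choosing best-approximation coefficients with controlled norms, and to argue that $\sigma_{\min}(\Phi)\ge\sigma_{\min}$ of the $\Phi_\nu$ block stays controlled while approximation error decays fast. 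If that fails, I would state the needed assumption explicitly, namely that $C\|\Xi_\psi\|_1\|\adj Qr-G_2^\top\phi\|_\infty\to0$.

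\textbf{Step 3 (optimal value of the finite problem).} It remains to show $\optm\theta_\psi{}^\top\bar m\to0$ as $d_\psi$ and then $d_V\to\infty$. Apply Lemma~\ref{lem: basis approximation feasibility} with $h$ the true feature vector $(\varphi,\alpha\adj Q r-r)$, $\tilde h$ its polynomial surrogate $\Xi_\psi^\top\phi$, and $\mu=\mu^N$. A $\theta_0$ exists because constants lie in both spans: take $V\equiv c$, which gives $(\alpha-1)c$, a negative constant that can be rescaled, or use a constant feature (I would assume such a $\theta_0$). Part 3 then yields
\[
0\le\optm\theta_\psi{}^\top\bar m\le\langle h^\top\theta^{d_V},\mu^N\rangle+O(\|h-\tilde h\|_\infty),
\]
where $\theta^{d_V}$ is optimal for the problem with $V$ restricted to degree $d_V$ but exact features. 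Finally, Proposition~\ref{prop: optimal value zero} gives value zero for \eqref{eq: ioc infinite dim} with $V=\bar V$ continuous. Approximating $\bar V$ uniformly by polynomials $V_{d_V}$, the resulting $\psi$ violates nonnegativity by at most $(1+\alpha)\|\bar V-V_{d_V}\|_\infty$; adding a multiple of $\theta_0$ restores feasibility at cost $O(\|\bar V-V_{d_V}\|_\infty)\to0$. Therefore the restricted optimal value tends to $0$ as $d_V\to\infty$, and combining with Steps 1 and 2 (together with the lower bound) yields the claimed triple limit equal to zero.
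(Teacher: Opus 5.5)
Your plan is essentially the paper's proof. It uses the same chain: from $\langle\hat\theta_\ell^\top\varphi+\hat\theta_V^\top(\alpha Q^*r-r),\mu^N\rangle$ to $\hat\theta_\psi^\top\bar m$, then to $(\theta_\psi^\star)^\top\bar m$ via Lemma~\ref{lem: error propagation} and Lemma~\ref{lem: estimator-converge-in-p}, then to the optimum $\langle\hat\psi_2,\mu^N\rangle$ of \eqref{eq: thm-proof-apprx-3} via Part~3 of Lemma~\ref{lem: basis approximation feasibility}, and finally to $0$ as $d_V\to\infty$. The differences are minor: your direct bound $\|\hat\theta\|_1\|h-\tilde h\|_\infty$ replaces the paper's detour through $\hat\theta+a\theta_0$, you use a one-sided bound plus nonnegativity where the paper uses a two-sided one, and you prove the $d_V$ step by uniformly approximating $\bar V$ rather than citing \citep[Lem.~4.6]{wang2025consistent}.

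The $d_\psi$-dependence of $C$ and $\|\Xi_\psi\|_1$ that you flag in Step~2 is a real issue, and the paper's proof does not address it. It sends $\|h-\tilde h\|_\infty\to0$ while treating $C/\alpha$ and $\|\Xi_\psi\|_1$ as constants, even though $\sigma_{\min}(\bar W)$ and $\sigma_{\min}(\Phi)$ may deteriorate as higher noise moments enter, and $\|\Xi_\psi\|_1$ typically grows with $D_\psi$ for a Lagrange basis. Your fallback assumption $C\|\Xi_\psi\|_1\|Q^*r-G_2^\top\phi\|_\infty\to0$ is therefore exactly what the paper uses tacitly. Likewise, both arguments tacitly assume the $\ell_1$ norm bounds stay inactive as $d_V\to\infty$.
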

\begin{proof}
  Recall that \eqref{eq: final ioc optimization} and \eqref{eq: finite dim ioc in expectation} have the same feasible set.
  Since $ (\hat{\rv{\theta}}_\psi, \hat{\rv{\theta}}_\ell, \hat{\rv{\theta}}_V) $ is feasible to \eqref{eq: final ioc optimization},
  it is also feasible to \eqref{eq: finite dim ioc in expectation},
  and the latter is an approximation of the following optimization
  \begin{equation}\label{eq: thm-proof-apprx-3}
  \begin{aligned}
      \min_{\psi, \theta_\ell, \theta_V} &\;\;\langle \psi, \mu^N\rangle,\\
      &\;\; \psi = \theta_\ell^\top \varphi + \theta_V^\top (\alpha \adj{\trsknl}r - r),\\
      &\;\; \int_{\scspace} \psi(x,a)\drm x\drm a \ge 1,\\
      &\;\; \psi\in\contfnc{\scspace},\psi(x,a)\ge 0, \forall (x,a)\in\scspace,\\
      &\;\; \|\theta_V\|_1 \le 2\beta_V^\prime, \|\theta_\ell\|_1\le 2\beta_\ell. 
  \end{aligned}
  \end{equation}
  Since in practice, we can choose the norm bounds $\beta_V',\beta_\ell$ for the $\theta_V$ and $\theta_\ell$ to be arbitrarily large such that the optimal solution $(\hat{\psi}_2, \hat{\theta}_{\ell,2}, \hat{\theta}_{V,2})$ for \eqref{eq: thm-proof-apprx-3} satisfies $\|[\hat \theta_{V,2}^\top, \hat \theta_{\ell,2}^\top]^\top\|_1\le (\beta_V'+\beta_\ell)/2$.

  Next, we would like to first analyze the relationship between \eqref{eq: thm-proof-apprx-3} and \eqref{eq: finite dim ioc in expectation} and we want to apply Lemma~\ref{lem: basis approximation feasibility}.
  To this end, let $\hat{\theta} := [\hat{\theta}_\ell^\top, \hat{\theta}_V^\top]^\top$, 
  $h := [\varphi^\top, (\alpha\adj{Q}r-r)^\top]^\top$ and $ \tilde{h} := \Xi_\psi^\top \phi$.
  Note that any constant component in the cost function do not change the optimal control problem.
  Thus, without loss of generality, we assume $h_{[0]} = \tilde{h}_{[0]} = 1$, i.e., there is a ``1'' in the cost function bases, 
  then $\theta_0 = (1, 0, \dots, 0)^\top$.
  Moreover, by Stone-Weierstrass theorem, it holds that $\|h-\tilde{h}\|_\infty\le 1/2$ for large enough $d_\psi$ and $d_V$.
  Hence, by Lemma~\ref{lem: basis approximation feasibility}, it holds that
  \begin{align}
    &| \langle (\hat{\rv{\theta}}+a\theta_0)^\top h - \hat{\rv{\theta}}^\top\tilde{h}, \mu^N \rangle |
    \le (\beta_\ell + \beta_V^\prime)(1\!+\!\|h\|_\infty)\|h-\tilde{h}\|_\infty.\label{eq: theorem-eq-2}
  \end{align}
  Furthermore, recall the notation in Lemma \ref{lem: basis approximation feasibility}, it holds that
  \begin{align}
    &|\langle \hat{\rv{\theta}}_\ell^\top\varphi + \hat{\rv{\theta}}_V^\top(\alpha\adj{Q}r-r) 
        - (\hat{\rv{\theta}} + a\theta_0 )^\top h,
         \mu^N \rangle| = 
         \langle a\theta_0^\top h, \mu^N \rangle \nonumber\\
         &\le |a\underbrace{\theta_0^\top h}_{=1}| = |a|, \label{eq: theorem-eq-1}
  \end{align}
  where $a = \|\hat{\theta}\|_1\|h-\tilde{h}\|_\infty$.
  Moreover, let $[\hat{\theta}_{\psi, 1}^\top, \underbrace{\hat{\theta}_{\ell, 1}^\top, \hat{\theta}_{V, 1}^\top}_{\hat\theta_1^\top}]^\top$ be optimal to
  \eqref{eq: finite dim ioc in expectation}. By Lemma~\ref{lem: error propagation}, it follows that
  \begin{align}
    |  (\hat{\theta}_{\psi,1}-\hat{\rv{\theta}}_\psi)^\top\bar{m}|
    \le 2\|\bar{m} - \hat{m}^M\|_\infty \|\Xi_\psi\|_1(\beta_V' + \beta_\ell).\label{eq: theorem-eq-3}
  \end{align}
  On the other hand, recall that $\langle\phi,\mu^N\rangle = \bar{m}$, $\hat\theta_{\psi,1} = \Xi_\psi\hat{\theta}_1$, $\Xi_\psi^T\phi = \tilde h$, $\hat\psi_2 = \hat\theta_2^\top h$ and apply Lemma \ref{lem: basis approximation feasibility}, it follows that
  \begin{equation}\label{eq: theorem-eq-4}
  \begin{aligned}
    &| \hat{\theta}_{\psi,1}^\top \bar{m} \!-\! \langle \hat{\psi}_2, \mu^N \rangle | 
    =|\langle \hat{\theta}_{\psi,1}^\top\phi \!-\! \hat\psi_2,\mu^N\rangle |
    =|\langle \hat{\theta}_1^\top\tilde{h} \!-\! \hat{\theta}_2^\top h,\mu^N\rangle |\\
    &\le  \frac{\beta_\ell + \beta_V^\prime}{2}(1+\|h\|_\infty)\|h-\tilde{h}\|_\infty.
  \end{aligned}
  \end{equation}
  And similarly, since 
  $
    \hat{\theta}_{\psi}^\top \bar{m}
    = \langle \hat{\theta}_{\psi}^\top\phi, \mu^N \rangle
    =\langle \hat{\theta}^\top\Xi_{\psi}^\top\phi, \mu^N \rangle
    = \langle \hat{\theta}^\top \tilde{h}, \mu^N \rangle,
  $
  by using triangular inequalities repeatedly,
  it further holds that
  \begin{align*}
    &|\langle \hat{\rv{\theta}}_\ell^\top\varphi + \hat{\rv{\theta}}_V^\top(\alpha\adj{Q}r-r) - \hat{\psi}_2, 
         \mu^N \rangle|\\
    &\overset{\eqref{eq: theorem-eq-1}}{\le} |\langle (\hat{\rv{\theta}} + a\theta_0 )^\top h - \hat{\psi}_2, 
         \mu^N \rangle| + (\beta_\ell+\beta_V^\prime)\|h-\tilde{h}\|_\infty\\
    &\overset{\eqref{eq: theorem-eq-2}}{\le} |\langle \hat{\rv{\theta}}^\top\tilde{h} - \hat{\psi}_2, 
         \mu^N \rangle| + (\beta_\ell+\beta_V^\prime)(2+\|h\|_\infty)\|h-\tilde{h}\|_\infty\\
    &= \;\;| \hat{\rv{\theta}}_{\psi}^\top \bar{m} - \langle \hat{\psi}_2,\mu^N \rangle|+ (\beta_\ell+\beta_V^\prime)(2+\|h\|_\infty)\|h-\tilde{h}\|_\infty \\
    &\overset{\eqref{eq: theorem-eq-3}}{\le} 
    | \hat{\theta}_{\psi,1}^\top \bar{m} - \langle \hat{\psi}_2,\mu^N \rangle| 
    + (\beta_\ell+\beta_V^\prime)[2\|\bar{m} - \hat{\rv{m}}^M\|_\infty \|\Xi_\psi\|_1\\
    & \qquad + (2+\|h\|_\infty)\|h-\tilde{h}\|_\infty]\\
    &\overset{\eqref{eq: theorem-eq-4}}{\le} 
      (\beta_\ell+\beta_V^\prime)[ 2\|\bar{m} - \hat{\rv{m}}^M\|_\infty \|\Xi_\psi\|_1+(\frac{5}{2}+\frac{3}{2}\|h\|_\infty)\|h-\tilde{h}\|_\infty].
  \end{align*}
  In addition, by Lemma~\ref{lem: estimator-converge-in-p}, and recall \eqref{eq: proj r G1 phi}, it holds that
  \begin{align*}
    &\plim_{M\rightarrow \infty} \|\hat{\rv{m}}^M-\bar{m}\|_\infty 
    = \|\tilde{m}-\bar{m}\|_\infty \le  C\left\| \adj{\trsknl}r - G_2^\top\phi \right\|_\infty\\
    &=C\left\| \adj{\trsknl}r\! -\!\frac{1}{\alpha}r\!-\! G_2^\top\phi \!+\!\frac{1}{\alpha}r\right\|_\infty \!=\! C\left\| \adj{\trsknl}r \!-\!\frac{1}{\alpha}r\!-\! G_2^\top\phi \!+\!\frac{1}{\alpha}G_1^\top\phi\right\|_\infty\\
    &\le \frac{C}{\alpha}\left\|[(\varphi \!-\! H^\top\phi)^\top,(\alpha \adj{Q}r\!-\!r\!-\!\alpha G_2^\top\phi\!+\!G_1^\top\phi)^\top]^\top\right\|_\infty=\frac{C}{\alpha} \|h \!-\! \tilde{h}\|_\infty
  \end{align*} 
  And by Stone-Weierstrass theorem, as $d_\psi$ tends infinity, $ \|h - \tilde{h}\|_\infty $ converges to zero, hence
  \begin{align*}
    \lim_{d_\psi\rightarrow\infty}\plimsup_{M\rightarrow\infty}
    |\langle \hat{\rv{\theta}}_\ell^\top\varphi + \hat{\rv{\theta}}_V^\top(\alpha\adj{Q}r-r) - \hat{\psi}_2, 
         \mu^N \rangle| = 0
  \end{align*}
  By \citep[Lem.~4.6]{wang2025consistent}, $\langle\hat{\psi}_2, \mu^N\rangle$ shall tend to the optimal value of \eqref{eq: ioc infinite dim} as $d_V\rightarrow\infty$.
  Thus by Proposition~\ref{prop: optimal value zero}, it holds that 
  $\lim_{d_V\rightarrow\infty}\langle\hat{\psi}_2, \mu^N\rangle = 0$ and the conclusion follows.
\end{proof}

\section{Experiments}\label{sec: experiments}

In this section, we evaluate the performance of the proposed algorithm on
(1) linear system and
(2) temperature control system.
In practice, we employ Lagrange interpolant basis and monomial basis as $\phi$ and $r$ respectively.
Moreover, recall Assumption~\ref{Asmp: Compact space}, the state-action space $X\times A$ is Archimedean.
By \citep{putinar1993positive}\citep[Thm.~3.20]{laurent2009sums}, Weighted Sum-of-Square polynomials are dense in non-negative polynomial space on $X\times A$.
Thus, we deal the non-negative constraints \eqref{eq: finite dim ioc-constr-3} with Weighted Sum-of-Square polynomial constraints. 
Then the optimization \eqref{eq: final ioc optimization} is solved in the same way as our prior work \citep{wang2025consistent}.

\subsection{Linear system experiment}\label{sec: linear expe}

We first validate the proposed method on a discrete-time linear system with dynamics
\begin{align*}
  \rv{x}_{t+1} = A\rv{x}_t + B\rv{u}_t + \rv{w}_t,
\end{align*}
where $A = \begin{bmatrix} 1.0 & 0.1 \\ 0 & 1.0 \end{bmatrix}$, 
$B = \begin{bmatrix} 0 \\ 0.1 \end{bmatrix}$,
and $\rv{w}_t \!\sim\! \mathcal{N}_{0.1}(0, 0.01^2 I_2)$ is 
the process noise following truncated normal distribution on the hypercube $[-0.1,0.1]^2$.
The feasible state and action spaces are set to $ [-1,1]^2, [-1,1] $.
The cost function takes the quadratic form
\begin{align*}
  \bar{\ell}(x,u) = x^\top \mathrm{diag}(q_1, q_2) x + r u^2.
\end{align*}
To generate diverse demonstration data, we conduct 100 independent trials,
where in each trial, the cost coefficients $(q_1, q_2, r)$ are sampled uniformly in $[0,1]\times[0,1]\times[0,1]$.
Since the optimal policy remains unchanged under constant scaling of the cost function,
without loss of generality, the coefficient vectors are normalized to unit norm $(q_1^2 + q_2^2 + r^2) = 1$.
The estimated coefficients are similarly normalized, and estimation errors are evaluated based on these normalized vectors.
For each trial, the linear feedback control law is computed via the Algebraic Riccati Equation (ARE) 
with a discount factor $\alpha = 0.9$.
Consequently, $M = 256$ trajectories of length $N = 10$ are generated with initial states drawn 
from truncated normal distribution$\mathcal{N}_{0.9}(0, 0.3^2 I_2)$.
Furthermore, additive Gaussian observation noise with standard deviation $\sigma_{\mathrm{obs}} = 0.05$ 
is applied to both state and action observations.

For this linear system, the polynomial approximation degrees are set as $(d_\psi, d_V) = (2, 2)$.
The GMM estimator follows the procedure in Section~\ref{sec: moment estimator},
with block-diagonal weight matrix \eqref{eq: weighted matrix}, in which we take regularization term $\lambda=10^{-4}$.
The proposed method recovers all three cost coefficients $(\hat{q}_1, \hat{q}_2, \hat{r})$, 
which are then normalized to unit norm for comparison with the ground truth.

Fig.~\ref{fig:linear_error_hist} shows the signed estimation error distributions for $(q_1, q_2)$.
Note that due to the unit-norm constraint $(q_1^2 + q_2^2 + r^2) = 1$, 
the normalized coefficient vector has only two degrees of freedom; 
thus, the errors in $(q_1, q_2)$ fully characterize the estimation accuracy.
As illustrated, the estimation errors concentrate tightly around zero, 
with $(q_1: \text{mean} = 0.0004, \text{std} = 0.0065)$ and $(q_2: \text{mean} = -0.0005, \text{std} = 0.0115)$.
This demonstrates that the proposed IOC method can accurately recover the cost function under noisy observations.

\begin{figure}[!htpb]
  \centering
  \includegraphics[width=\linewidth]{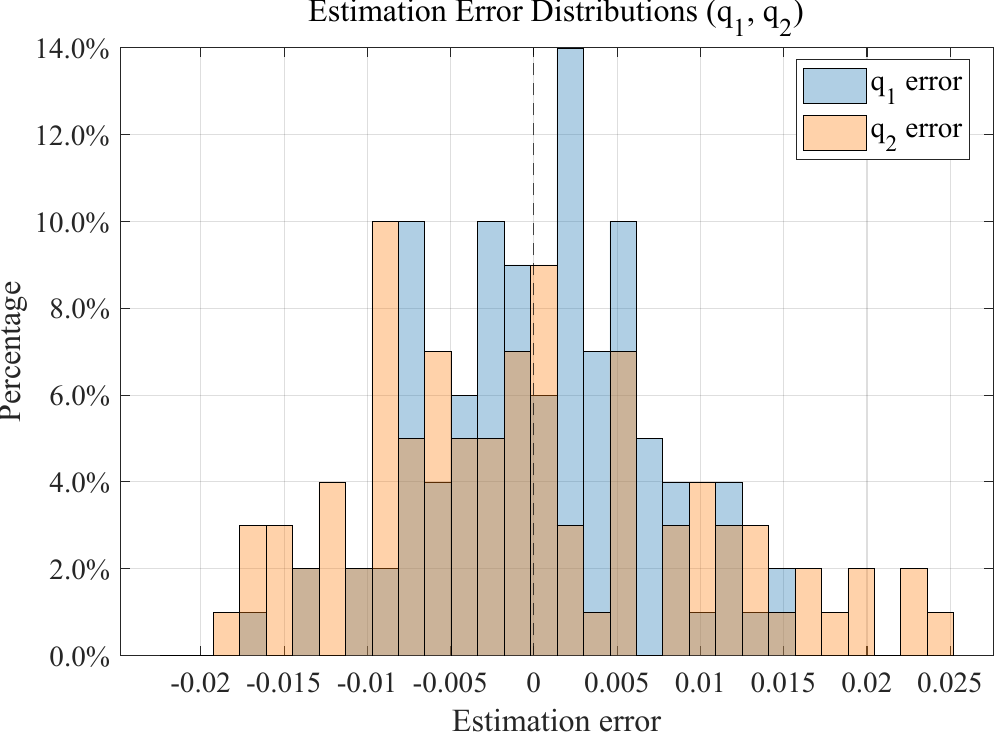}
  \caption{Signed estimation error distributions for $(q_1, q_2)$ over 100 trials.
    Histogram bins span the 1st--99th percentiles (approximately 2\% outliers excluded).
    Both distributions center at zero with small variance, validating the proposed IOC method.}
  \label{fig:linear_error_hist}
\end{figure}

\subsection{Temperature control system experiments}
Next, we evaluate the proposed method on a nonlinear temperature control system.
The system dynamics are governed by heat transfer equations including radiation and convection,
and discretized with a sampling period $\Delta t=1$. More specifically, the temperature evolution is governed by
\begin{align*}
  T_{t+1} = T_{t} + \Delta t/C\left(P_{\text{in}} -  
      h A (T_t - T_{\text{env}}) - \epsilon\sigma_{\text{SB}} A (T^4-T_{env}^4) \right)
\end{align*}
where $T$ is the temperature, $C=500$ is the heat capacity, $P_{\text{in}}$ is the heating power,
$\epsilon=0.9$ is the emissivity, $\sigma_{\text{SB}}=5.67\times 10^{-8}$ is the Stefan-Boltzmann constant, 
$A=0.1$ is the surface area, $h=10$ is the convective heat transfer coefficient, 
and $T_{\text{env}}=293.0$ is the ambient temperature.
The system is normalized to unit scale 
by $x = (T - 300)/100$ and $u = P_{\text{in}}/1000$, and in these coordinates the system takes the form
\begin{align*}
  \rv{x}_{t+1} = \sum_{i=0}^{4} a_i \rv{x}^i_t + b \rv{u}_t + \rv{w}_t,
\end{align*}
where $w_t\sim \mathcal{N}_{0.1}(0, 0.01^2)$ is the process noise, 
and the feasible state and action spaces are both set to $[-1, 1]$.
The cost function has the form
\begin{align*}
  \bar{\ell}(x,u) = q(x - x_{\text{ref}})^2 + r(u - u_{\text{ref}})^2,
\end{align*}
where $x_{\text{ref}} = 0.75$ and $u_{\text{ref}} = -1.0$ are normalized reference values.

For each trial, the cost coefficients $(q, r)$ are sampled uniformly from $(0,1)\times(0,1)$ and normalized to unit norm.
Since the system is nonlinear, we do not have closed-form optimal control laws to generate the demonstration data. Hence a Model Predictive Controller (MPC) with time-horizon length 64 and discount factor $\alpha = 0.9$ 
is used to generate demonstration trajectories.
Each trajectory has length $N = 4$, and the initial states is randomly sampled from normal distribution $\mathcal{N}_{0.9}(0, 0.3^2)$.
The proposed IOC algorithm is employed to estimate the cost coefficients $(\hat q, \hat r)$, which are also normalized to unit norm.
Following the same principle as in Section~\ref{sec: linear expe}, the estimation error is evaluated via the Euclidean distance
$ \| (\hat q, \hat r) - (q_{\text{true}}, r_{\text{true}}) \|_2$ between normalized vectors.

\subsubsection{Impact of observation noise and dataset size}

This experiment investigates how estimation accuracy varies with observation noise level and dataset size.
We conduct 400 independent trials with with three levels of Gaussian observation noise: 
$\sigma_{\text{obs}} \in \{0.01, 0.05, 0.1\}$ ($\mathrm{SNR}\approx 36dB, 22dB, 16dB$).
For each noise level, the dataset size $M$ ranges from 2 to 512 trajectories.
The polynomial basis degrees are chosen as $(d_\psi, d_V) = (6, 2)$.
And in these and following experiments, the regularization term in \eqref{eq: weighted matrix} is set as $\lambda=10^{-4}$.

Fig.~\ref{fig:temp_error_datasize_noise} shows the mean estimation error as a function of dataset size.
With 512 demonstration trajectories,
the mean estimation error across all randomly generated cost coefficients reduces to approximately 0.02 for all three observation noise levels.
This demonstrates the statistical consistency of the proposed IOC algorithm.

\begin{figure}[!htpb]
  \centering
  \includegraphics[width=\linewidth]{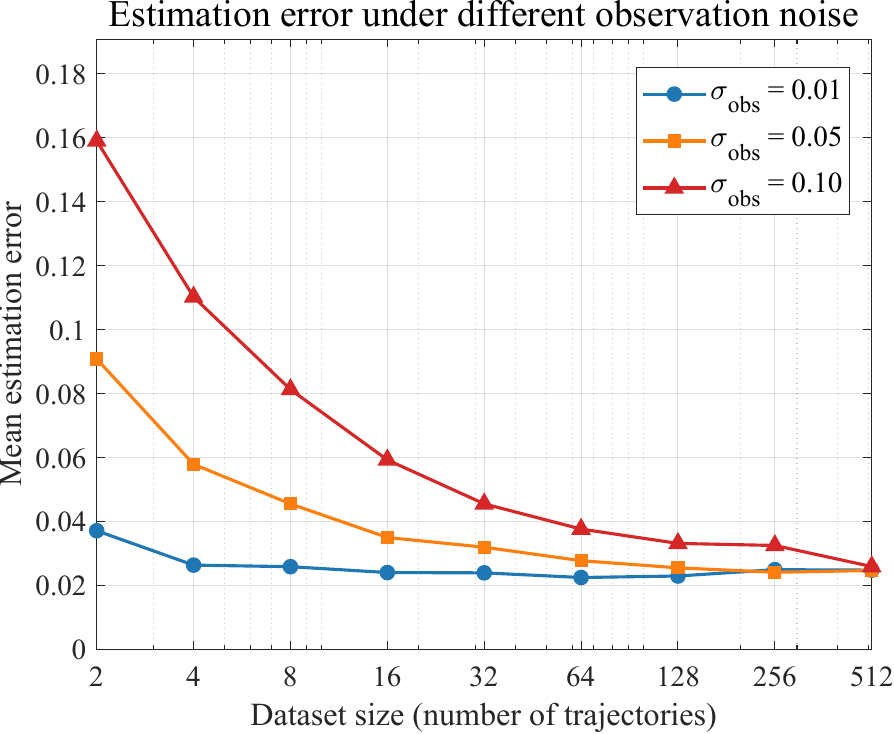}
  \caption{Estimation error vs.\ dataset size under three observation noise levels.
    Lines represent mean errors across 400 trials; higher noise levels produce consistently larger errors.
    As the dataset increases, the estimation error first decreases and then tends to stabilize. }
  \label{fig:temp_error_datasize_noise}
\end{figure}

\subsubsection{Impact of polynomial approximation degree}
As illustrated in Fig.~\ref{fig:temp_error_datasize_noise}, for large enough dataset, 
the estimation error is mainly limited by the approximation accuracy (the estimation error maintains the same level when the dataset size increases).
Thus, this experiment examines the impact of polynomial basis degree on estimation accuracy.
In particular, we fix the observation noise level at $\sigma_{\text{obs}} = 0.05$ 
and increase the dataset size from 4 to 1024 trajectories.
Three polynomial degree configurations are tested: 
$(d_\psi, d_V) = (4, 2)$, $(6, 4)$, and $(10, 4)$, which
represents the increasing approximation capacity for the value function and cost basis.
Each estimation error is averaged over 50 trials.

Fig.~\ref{fig:temp_error_datasize_order} shows 
that higher-degree approximations achieve lower estimation errors
when sufficient data is available.
This validates the theoretical result that the approximation error can be reduced 
by increasing the basis degree, 
provided that sufficient data is available for reliable moment estimation.

\begin{figure}[!htpb]
  \centering
  \includegraphics[width=\linewidth]{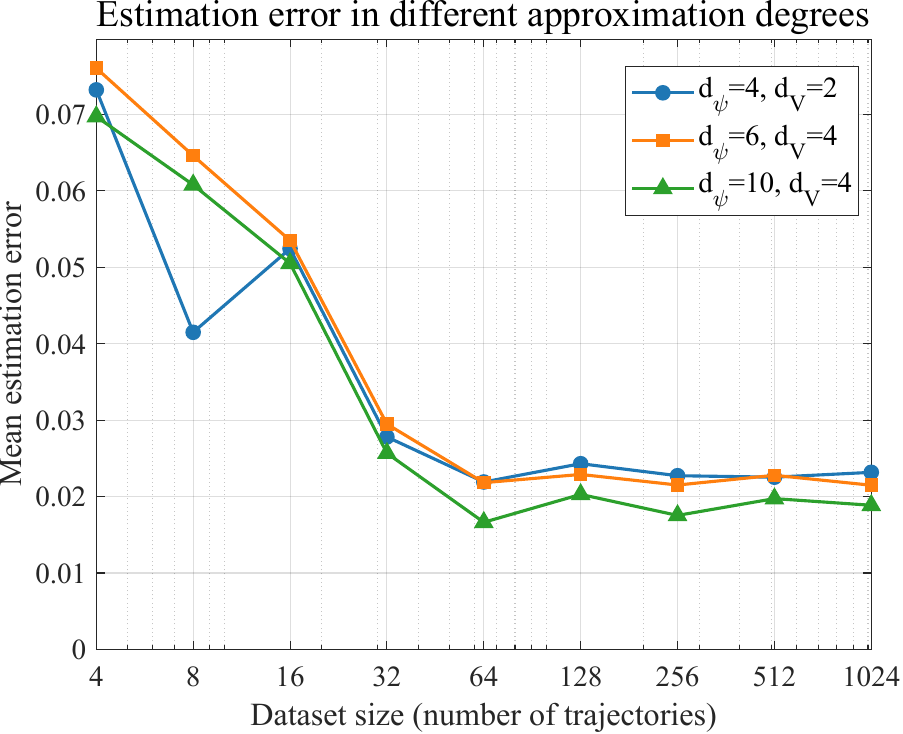}
  \caption{Estimation error vs.\ dataset size for three polynomial basis configurations.
    Higher-degree bases yield better accuracy with large datasets,
    while for small datasets where statistical noise dominates, the performance is similar.}
  \label{fig:temp_error_datasize_order}
\end{figure}

\section{Conclusion}
In this work, we proposed an IOC algorithm for discrete-time infinite horizon nonlinear systems under noisy observations.
In particular, the transition kernel is only assumed to be weak Feller which includes both deterministic and stochastic systems.
The proposed method is based on the necessary and sufficient optimality conditions induced by the occupation measure formulation. The resulting infinite dimensional optimization problem is approximated via finite-dimensional polynomial bases and the corresponding moments of the occupation measure are estimated via a GMM estimator.
We established the consistency of the proposed IOC algorithm, i.e., the estimated cost function converges to the true cost function as the number of demonstration trajectories and the polynomial basis degrees tend infinity. This guarantees that the proposed method can recover the true cost function with arbitrary accuracy given sufficient data and approximation capacity. Moreover, since the resulting IOC optimization is convex, it also spares itself from local minima issues and make sure the theoretical consistency can actually be attained in practice. Through numerical experiments on linear and nonlinear systems, we demonstrate the efficacy of the proposed IOC algorithm.




\bibliographystyle{elsarticle-num} 
\bibliography{ref.bib}

@article{hall2003large,
  title={The large sample behaviour of the generalized method of moments estimator in misspecified models},
  author={Hall, Alastair R and Inoue, Atsushi},
  journal={Journal of Econometrics},
  volume={114},
  number={2},
  pages={361--394},
  year={2003},
  publisher={Elsevier}
}

@book{bertsekas1996stochastic,
  title={Stochastic optimal control: the discrete-time case},
  author={Bertsekas, Dimitri and Shreve, Steven E},
  volume={5},
  year={1996},
  publisher={Athena Scientific}
}

@article{wang2025consistent,
  title={Consistent inverse optimal control for discrete-time nonlinear stochastic systems},
  author={Wang, Ziliang and Zhang, Han and Ringh, Axel},
  journal={arXiv preprint arXiv:2511.22579},
  year={2025}
}

@incollection{laurent2008sums,
  title={Sums of squares, moment matrices and optimization over polynomials},
  author={Laurent, Monique},
  booktitle={Emerging applications of algebraic geometry},
  pages={157--270},
  year={2008},
  publisher={Springer}
}

@Article{harris2020array,
 title         = {Array programming with {NumPy}},
 author        = {Charles R. Harris and K. Jarrod Millman and St{\'{e}}fan J.
                 van der Walt and Ralf Gommers and Pauli Virtanen and David
                 Cournapeau and Eric Wieser and Julian Taylor and Sebastian
                 Berg and Nathaniel J. Smith and Robert Kern and Matti Picus
                 and Stephan Hoyer and Marten H. van Kerkwijk and Matthew
                 Brett and Allan Haldane and Jaime Fern{\'{a}}ndez del
                 R{\'{i}}o and Mark Wiebe and Pearu Peterson and Pierre
                 G{\'{e}}rard-Marchant and Kevin Sheppard and Tyler Reddy and
                 Warren Weckesser and Hameer Abbasi and Christoph Gohlke and
                 Travis E. Oliphant},
 year          = {2020},
 month         = sep,
 journal       = {Nature},
 volume        = {585},
 number        = {7825},
 pages         = {357--362},
 doi           = {10.1038/s41586-020-2649-2},
 publisher     = {Springer Science and Business Media {LLC}},
 url           = {https://doi.org/10.1038/s41586-020-2649-2}
}

@article{likmeta2021dealing,
  title={Dealing with multiple experts and non-stationarity in inverse reinforcement learning: an application to real-life problems},
  author={Likmeta, Amarildo and Metelli, Alberto Maria and Ramponi, Giorgia and Tirinzoni, Andrea and Giuliani, Matteo and Restelli, Marcello},
  journal={Machine Learning},
  volume={110},
  number={9},
  pages={2541--2576},
  year={2021},
  publisher={Springer}
}

@inproceedings{abbeel2004apprenticeship,
  title={Apprenticeship learning via inverse reinforcement learning},
  author={Abbeel, Pieter and Ng, Andrew Y},
  booktitle={Proceedings of the twenty-first international conference on Machine learning},
  pages={1},
  year={2004}
}

@article{ho2016generative,
  title={Generative adversarial imitation learning},
  author={Ho, Jonathan and Ermon, Stefano},
  journal={Advances in neural information processing systems},
  volume={29},
  year={2016}
}

@INPROCEEDINGS{Menner2018Convex,
  author={Menner, Marcel and Zeilinger, Melanie N.},
  booktitle={2018 European Control Conference (ECC)}, 
  title={Convex Formulations and Algebraic Solutions for Linear Quadratic Inverse Optimal Control Problems}, 
  year={2018},
  volume={},
  number={},
  pages={2107-2112},
  doi={10.23919/ECC.2018.8550090}
}

@article{fernandez2025estimating,
  title={Estimating unknown dynamics and cost as a bilinear system with Koopman-based Inverse Optimal Control},
  author={Fernandez-Ayala, Victor Nan and Deka, Shankar A and Dimarogonas, Dimos V},
  journal={arXiv preprint arXiv:2501.18318},
  year={2025}
}

@article{lian2022inverse,
  title={Inverse reinforcement learning for multi-player noncooperative apprentice games},
  author={Lian, Bosen and Xue, Wenqian and Lewis, Frank L and Chai, Tianyou},
  journal={Automatica},
  volume={145},
  pages={110524},
  year={2022},
  publisher={Elsevier}
}

@INPROCEEDINGS{Rodrigues2022Inverse,
  author={Rodrigues, Luis},
  booktitle={2022 IEEE 61st Conference on Decision and Control (CDC)}, 
  title={Inverse Optimal Control with Discount Factor for Continuous and Discrete-Time Control-Affine Systems and Reinforcement Learning}, 
  year={2022},
  volume={},
  number={},
  pages={5783-5788},
  doi={10.1109/CDC51059.2022.9992796}}

@inproceedings{ziebart2008maximum,
  title={Maximum entropy inverse reinforcement learning.},
  author={Ziebart, Brian D and Maas, Andrew L and Bagnell, J Andrew and Dey, Anind K and others},
  booktitle={Aaai},
  volume={8},
  pages={1433--1438},
  year={2008},
  organization={Chicago, IL, USA}
}

@ARTICLE{zhou2018Infinite,
  author={Zhou, Zhengyuan and Bloem, Michael and Bambos, Nicholas},
  journal={IEEE Transactions on Automatic Control}, 
  title={Infinite Time Horizon Maximum Causal Entropy Inverse Reinforcement Learning}, 
  year={2018},
  volume={63},
  number={9},
  pages={2787-2802},
  doi={10.1109/TAC.2017.2775960}
}

@ARTICLE{Mehr2023Maximum,
  author={Mehr, Negar and Wang, Mingyu and Bhatt, Maulik and Schwager, Mac},
  journal={IEEE Transactions on Robotics}, 
  title={Maximum-Entropy Multi-Agent Dynamic Games: Forward and Inverse Solutions}, 
  year={2023},
  volume={39},
  number={3},
  pages={1801-1815},
  doi={10.1109/TRO.2022.3232300}
}

@article{garrabe2025convex,
  title={On convex data-driven inverse optimal control for nonlinear, non-stationary and stochastic systems},
  author={Garrabe, Emiland and Jesawada, Hozefa and Del Vecchio, Carmen and Russo, Giovanni},
  journal={Automatica},
  volume={173},
  pages={112015},
  year={2025},
  publisher={Elsevier}
}

@article{zhang2019inverse,
  title={Inverse optimal control for discrete-time finite-horizon Linear Quadratic Regulators},
  author={Zhang, Han and Umenberger, Jack and Hu, Xiaoming},
  journal={Automatica},
  volume={110},
  pages={108593},
  year={2019},
  publisher={Elsevier}
}

@inproceedings{zhang2019inverseCDC,
  title={Inverse optimal control for finite-horizon discrete-time linear quadratic regulator under noisy output},
  author={Zhang, Han and Li, Yibei and Hu, Xiaoming},
  booktitle={2019 IEEE 58th Conference on Decision and Control (CDC)},
  pages={6663--6668},
  year={2019},
  organization={IEEE}
}

@inproceedings{li2018convex,
  title={A convex optimization approach to inverse optimal control},
  author={Li, Yibei and Zhang, Han and Yao, Yu and Hu, Xiaoming},
  booktitle={2018 37th Chinese Control Conference (CCC)},
  pages={257--262},
  year={2018},
  organization={IEEE}
}

@article{yu2021system,
  title={System identification approach for inverse optimal control of finite-horizon linear quadratic regulators},
  author={Yu, Chengpu and Li, Yao and Fang, Hao and Chen, Jie},
  journal={Automatica},
  volume={129},
  pages={109636},
  year={2021},
  publisher={Elsevier}
}

@article{mombaur2010human,
  title={From human to humanoid locomotion -- an inverse optimal control approach},
  author={Mombaur, Katja and Truong, Anh and Laumond, Jean-Paul},
  journal={Autonomous robots},
  volume={28},
  number={3},
  pages={369--383},
  year={2010},
  publisher={Springer}
}

@article{jin2019inverse,
  title={Inverse optimal control for multiphase cost functions},
  author={Jin, Wanxin and Kuli{\'c}, Dana and Lin, Jonathan Feng-Shun and Mou, Shaoshuai and Hirche, Sandra},
  journal={IEEE Transactions on Robotics},
  volume={35},
  number={6},
  pages={1387--1398},
  year={2019},
  publisher={IEEE}
}

@inproceedings{keshavarz2011imputing,
  title={Imputing a convex objective function},
  author={Keshavarz, Arezou and Wang, Yang and Boyd, Stephen},
  booktitle={2011 IEEE international symposium on intelligent control},
  pages={613--619},
  year={2011},
  organization={IEEE}
}

@article{molloy2018finite,
  title={Finite-horizon inverse optimal control for discrete-time nonlinear systems},
  author={Molloy, Timothy L and Ford, Jason J and Perez, Tristan},
  journal={Automatica},
  volume={87},
  pages={442--446},
  year={2018},
  publisher={Elsevier}
}

@article{aswani2018inverse,
  title={Inverse optimization with noisy data},
  author={Aswani, Anil and Shen, Zuo-Jun and Siddiq, Auyon},
  journal={Operations Research},
  volume={66},
  number={3},
  pages={870--892},
  year={2018},
  publisher={INFORMS}
}

@article{molloy2020online,
  title={Online inverse optimal control for control-constrained discrete-time systems on finite and infinite horizons},
  author={Molloy, Timothy L and Ford, Jason J and Perez, Tristan},
  journal={Automatica},
  volume={120},
  pages={109109},
  year={2020},
  publisher={Elsevier}
}

@article{pauwels2016linear,
  title={Linear conic optimization for inverse optimal control},
  author={Pauwels, Edouard and Henrion, Didier and Lasserre, Jean-Bernard},
  journal={SIAM Journal on Control and Optimization},
  volume={54},
  number={3},
  pages={1798--1825},
  year={2016},
  publisher={SIAM}
}

@inproceedings{rouot2017inverse,
  title={On inverse optimal control via polynomial optimization},
  author={Rouot, J{\'e}r{\'e}my and Lasserre, Jean-Bernard},
  booktitle={2017 IEEE 56th Annual Conference on Decision and Control (CDC)},
  pages={721--726},
  year={2017},
  organization={IEEE}
}

@article{zhang2021inverse,
  title={Inverse linear-quadratic discrete-time finite-horizon optimal control for indistinguishable homogeneous agents: A convex optimization approach},
  author={Zhang, Han and Ringh, Axel},
  journal={Automatica},
  volume={148},
  pages={110758},
  year={2023},
  publisher={Elsevier}
}

@inproceedings{zhang2022statistically,
  author={Zhang, Han and Ringh, Axel and Jiang, Weihan and Li, Shaoyuan and Hu, Xiaoming},
  booktitle={2022 41st Chinese Control Conference (CCC)}, 
  title={Statistically Consistent Inverse Optimal Control for Linear-Quadratic Tracking with Random Time Horizon}, 
  year={2022},
  volume={},
  number={},
  pages={1515-1522},
  doi={10.23919/CCC55666.2022.9902327}
}

@book{hernandez2012discrete,
  title={Discrete-time Markov control processes: basic optimality criteria},
  author={Hern{\'a}ndez-Lerma, On{\'e}simo and Lasserre, Jean Bernard},
  year={1996},
  publisher={Springer Science \& Business Media}
}

@article{putinar1993positive,
  title={Positive polynomials on compact semi-algebraic sets},
  author={Putinar, Mihai},
  journal={Indiana University Mathematics Journal},
  volume={42},
  number={3},
  pages={969--984},
  year={1993},
  publisher={JSTOR}
}

@incollection{laurent2009sums,
  title={Sums of squares, moment matrices and optimization over polynomials},
  author={Laurent, Monique},
  booktitle={Emerging applications of algebraic geometry},
  pages={157--270},
  year={2009},
  publisher={Springer}
}

@inproceedings{rickenbach2024inverse,
  title={Inverse optimal control as an errors-in-variables problem},
  author={Rickenbach, Rahel and Scampicchio, Anna and Zeilinger, Melanie N},
  booktitle={6th Annual Learning for Dynamics \& Control Conference},
  pages={375--386},
  year={2024},
  organization={PMLR}
}

@article{guo2023imitation,
  title={Imitation and transfer learning for LQG control},
  author={Guo, Taosha and Al Makdah, Abed AlRahman and Krishnan, Vishaal and Pasqualetti, Fabio},
  journal={IEEE Control Systems Letters},
  volume={7},
  pages={2149--2154},
  year={2023},
  publisher={IEEE}
}

@article{zhang2024statistically,
  title={Statistically consistent inverse optimal control for discrete-time indefinite linear--quadratic systems},
  author={Zhang, Han and Ringh, Axel},
  journal={Automatica},
  volume={166},
  pages={111705},
  year={2024},
  publisher={Elsevier}
}

@article{zhang2024inverse,
  title={Inverse optimal control for averaged cost per stage linear quadratic regulators},
  author={Zhang, Han and Ringh, Axel},
  journal={Systems \& Control Letters},
  volume={183},
  pages={105658},
  year={2024},
  publisher={Elsevier}
}

@article{li2026inverse,
  title={Inverse optimal control for linear quadratic tracking with unknown target states},
  author={Li, Yao and Yu, Chengpu and Fang, Hao and Chen, Jie},
  journal={Automatica},
  volume={185},
  pages={112819},
  year={2026},
  publisher={Elsevier}
}



\end{document}